
\documentclass[a4paper]{amsart}

\usepackage{amsthm}
\usepackage{amsmath}
\usepackage{amssymb}
\usepackage{latexsym}
\usepackage{enumerate}
\usepackage{graphicx}
\usepackage[utf8]{inputenc}
\usepackage{epsfig} 
\usepackage{pgf}
\usepackage{tikz}
\usepackage{float}
\usepackage{dsfont}
\usepackage{times}

\setlength{\textwidth}{12.5cm}


\newtheorem{theoremA}{Theorem}

\newtheorem{theorem}{Theorem}[section]
\newtheorem{lemma}[theorem]{Lemma}

\newtheorem{proposition}[theorem]{Proposition}
\newtheorem{corollary}[theorem]{Corollary}

\theoremstyle{definition}
\newtheorem{definition}[theorem]{Definition}

\newtheorem{remark}[theorem]{Remark}

\numberwithin{equation}{section}

\newcommand{\Vol}{\operatorname{Vol}}

\newcommand{\erre}{\mathbb{R}}

\newcommand{\kan}{\mathbb{K}^{n}(b)}

\newcommand{\Han}{\mathbb{H}^n(b)}
\newcommand{\Hatwo}{\mathbb{H}^2(b)}

\newcommand{\gr}{\operatorname{\nabla}}
\newcommand{\Sup}{\operatorname{Sup}}
\begin{document}
\title[Chern-Osserman Inequality for surfaces]{Chern-Osserman Inequality for minimal surfaces in a
Cartan-Hadamard manifold with strictly negative sectional curvatures}
\author[A. Esteve]{Antonio Esteve*}
\address{I.E.S. Alfonso VIII, Cuenca-Departamento de Matem\'{a}ticas, Universidad de
Castilla la Mancha, Cuenca, Spain.}
\email{aesteve7@gmail.com}
\author[V. Palmer]{Vicente Palmer**}
\address{Departament de Matem\`{a}tiques- Institute of New Imaging Technologies, Universitat Jaume I, Castellon, Spain.}
\email{palmer@mat.uji.es}
\thanks{}
\thanks{* Work partially supported by DGI grant MTM2010-21206-C02-02.\\ ** Work partially supported by the Caixa Castell\'{o} Foundation, and DGI grant MTM2010-21206-C02-02.}
\subjclass[2000]{Primary 53A15, 53C20; Secondary 53C42}
\keywords{minimal surface, Chern-Osserman Inequality, Gauss-Bonnet theorem,
Hessian-Index comparison theory, extrinsic balls.}
\maketitle

\begin{abstract}
We state and prove a Chern-Osserman-type Inequality in terms of the volume growth for minimal surfaces $S$ which have finite total extrinsic curvature and are properly immersed in a Cartan-Hadamard manifold $N$ with sectional curvatures bounded from above by a negative quantity $K_N \leq b <0$ and such that they are not too curved (on average) with respect to the Hyperbolic space with constant sectional curvature given by the upper bound $b$. We have also proven the same Chern-Osserman-type Inequality for minimal surfaces with finite total extrinsic curvature and properly immersed in an asymptotically hyperbolic Cartan-Hadamard manifold $N$ with sectional curvatures bounded from above by a negative quantity $K_N \leq b <0$.

\end{abstract}


\section{Introduction and main results}\label{secIntro}

In the papers \cite{Ch2} and \cite{Che3}, a Chern-Osserman type inequality was studied for a completely, properly and minimally immersed surface (cmi for short) in the Hyperbolic space, extending the classical result originally established by S.S. Chern and R. Osserman in \cite{ChOss} for cmi surfaces in the Euclidean space to this strictly negatively curved setting. 

Chern-Osserman's result (in fact, an improvement on this result due to M.T. Anderson in \cite{A1} and to L.P. Jorge and W.H. Meeks in \cite{JM}, see also White's work \cite{W} for an approach to this problem for non-minimal surfaces in the Euclidean space) relates the Euler characteristic $\chi(S)$ of a cmi surface with finite total curvature in $\erre^n$ with this total curvature and the (finite) supremum of the (non-decreasing) volume growth of the extrinsic domains (known as the {\em extrinsic balls}) $E_r=S^2\cap B^{0,n}_r$. We denote as $B^{b,n}_r$ the geodesic $r$-ball in $\kan$, which is the simply connected real space form with constant sectional curvature $b$. We also denote as $S^{b,n-1}_r$ the geodesic $r$-sphere in $\kan$.
 We have 
\begin{equation}\label{ChernOssEq}
-\chi(S)= \frac{1}{4\pi}\int_S\Vert B^S\Vert^2 d\sigma -\Sup_{r}\frac{\Vol(S^2\cap B^{0,n}_r)}{\Vol(B^{0,2}_r)}
\end{equation}

In contrast to what happens with cmi surfaces in $\erre^n$, the total Gaussian curvature of surfaces $S^2$ immersed in the hyperbolic space $\Han$ is always infinite, by the Gauss equation. However, it is possible to consider surfaces $S^2 \subseteq \Han$  with finite total extrinsic curvature $\int_S \Vert B^S\Vert^2 d\sigma <\infty$, and this is what Chen Qing and Chen Yi did in \cite{Ch2} and \cite{Che3}.

They proved, for a complete minimal surface $S^2$ (properly) immersed in $\Han$ and such that $\int_S\Vert B^S\Vert^2 d\sigma <\infty$, the following version of the Chern-Osserman Inequality, in terms of the volume growth of the extrinsic balls:
\begin{equation}\label{ChernOssHyp}
\begin{aligned}
 \Sup_{r>0}\frac{\Vol(S^2\cap B^{-1,n}_r)}{\Vol(B^{-1,2}_r)}&< \infty\,\,\,\text{and}\\
 -\chi(S)\leq \frac{1}{4\pi}\int_S\Vert B^S\Vert^2 d\sigma &-\Sup_{r}\frac{\Vol(S^2\cap B^{-1,n}_r)}{\Vol(B^{-1,2}_r)}
\end{aligned}
\end{equation}

The proof of these authors entails elaborate computations which depend on the properties of the hyperbolic functions, far from the complex analysis techniques used in the Euclidean case. 

A natural question which arises in this setting is: do we have an analogous formula when we consider complete minimal surfaces that are properly immersed in a Cartan-Hadamard manifold with sectional curvatures bounded from above by a strictly negative quantity $b <0$? In this paper we provide a (partial) answer to this question. Namely, we have proven that this formula holds for complete minimal surfaces that are properly immersed in an ambient Cartan-Hadamard manifold, with the Hilbert-Schmidt norm of its second fundamental form controlled by $h_b(r)$, the mean curvature (pointed inward) of the geodesic spheres  $S^{b,n-1}_r$ and with finite total extrinsic curvature. We also assume that our ambient Cartan-Hadamard manifold is not too curved (on average) with respect to the Hyperbolic space with constant sectional curvature given by the upper bound $b$. 

To state the first of our main results, it must be remembered (see, for example, \cite{O'N}) that 

$$
h_b(r)=\left\{
\begin{array}{l}
\sqrt{b}\cot\sqrt{b}r\,\,\text{  if }\,\,b>0\\
1/r\,\,\text{  if }\,\, b=0\\
\sqrt{-b}\coth\sqrt{-b}r\,\, \text{  if }\,\, b<0
\end{array}\right.
$$

\noindent We have the following:

\begin{theorem}\label{thmMain1} 
Let $S^{2}$ be a properly immersed minimal surface in
a Cartan-Hadamard manifold $N$, with sectional curvatures bounded from above by a negative quantity $K_N \leq b <0$.

Let us suppose that $\Vert A^S\Vert(q) < h_b(r(q))$ outside a compact set $K \subset S$, where $r(q)=dist_N(o,q)$ denotes the distance of $q \in S$ to a fixed pole $o \in N$ and that

\begin{equation}\label{Hypo1}
\int_{S}\Vert A^{S}\Vert ^{2}d\sigma<+\infty
\end{equation}
\noindent and 
\begin{equation}\label{Hyp2}
\int_{S}(b-K_{N}\vert_S)d\sigma<+\infty
\end{equation}
\noindent where $A^{S}$ denotes the second fundamental form of $S$ in $N$ and $K_{N}\vert_S$ denotes the sectional curvature of $N$ restricted to the tangent plane $T_qS$, for all $q \in S$.

Then:

\begin{enumerate}
\item $\Sup_{t>0}\frac{\operatorname{Vol}(E_{t})}{\operatorname{Vol}(B_{t}^{b,2}
)}<+\infty,$ 
\item $S^2$ has finite topological type,

\item $-\chi(S)\leq\frac{1}{4\pi}
\int_{S}\Vert A^{S}\Vert ^{2}d\sigma-\Sup_{t>0}\frac{\operatorname{Vol}
(E_{t})}{\operatorname{Vol}(B_{t}^{b,2})}+\frac{1}{2\pi}\int_{S}(b-K_{N}\vert_S)d\sigma. \\$
\end{enumerate}
where $E_t=B^N_{t}(o)\cap S$ denotes the $t$-extrinsic ball on surface $S$, centered at $o \in N$ (see definition \ref{ExtBall}), $B^N_t(o)$ is the geodesic $t$-ball centered at the pole $o$ in the ambient space $N$, and $B_{t}^{b,2}$ denotes the geodesic $t$-ball in $\Hatwo$.
\end{theorem}

\begin{remark} The main theorem in \cite{Che3} is a corollary of Theorem \ref{thmMain1}. In fact, 
note that condition (\ref{Hyp2}) is superfluous when the ambient manifold is $\Han$. On the other hand, when the ambient manifold is $\Han$, then condition  (\ref{Hypo1}) implies that $\Vert A^S\Vert(q)$ goes to $0$ as the distance $r(q)$ goes to infinity (see Theorem 2.1 in \cite{O}), so we have  that $\Vert A^S\Vert(q) < h_b(r(q))$ outside a compact set $K \subset S$ and we recover the complete statement of the main theorem in \cite{Che3}.
\end{remark}

\begin{remark}
By applying the Gauss formula, if the surface $S^2$ is minimal, the quantity $b-K_{N}\vert_S$ restricted to $S$ only depends on the points $p \in S$. Hence the assumption $\int
_{S}(b-K_{N}\vert_S)d\sigma<+\infty$ makes sense. We shall denote as $K_N$ the restricted $K_N\vert_S$ when there is no risk of confusion.
\end{remark}

Our proof of Theorem \ref{thmMain1} basically follows the lines of argument used in the proofs given in \cite{Ch2} and \cite{Che3}. A basic fact used in these proofs is the monotonicity property satisfied by the volume growth of the extrinsic balls in minimal surfaces that are properly immersed in the real space forms $\kan$ with $b \leq 0$, namely, that the function $\frac{\operatorname{Vol}(E_{t})}{\operatorname{Vol}(B_{t}^{b,2})}$ is a non-decreasing function of $r$. We have the same monotonicity property when we consider the extrinsic balls on a surface $S$ that is properly immersed in a Cartan-Hadamard manifold $N$ with negative and variable sectional curvature bounded from above by $b<0$. This monotonicity property comes from certain isoperimetric inequalities satisfied by the extrinsic balls in this context which are, in turn, based on the application of a divergence theorem to comparisons of the Laplacian of the extrinsic distance defined on the surface. As we can see in \cite{GW} (see also \cite{MM} and \cite{Pa3}), this comparison arises from the Index lemma, which provides a formula for the Hessian of the distance function in terms of the index form along the normal geodesics to the surface of the Jacobi fields satisfying some given initial conditions. 

Following the break with the framework given by the constant curvature of the ambient space $\Han$ in the works \cite{Ch2} and \cite{Che3}, we have had to overcome several analytical and topological difficulties. 

First, we have extended the Hessian analysis of the extrinsic distance alluded to earlier (which is used in a restricted way in \cite{Ch2} and \cite{Che3} for surfaces in the real space forms $\Han$) to surfaces in Cartan-Hadamard manifolds by using comparison results for the Hessian and the Laplacian of a radial function that can be found in \cite{MP1}, \cite{MP5}, and \cite{HP}. These results are, in turn, based on the Jacobi-Index analysis for the Hessian of the distance function given in \cite{GW}, which we have mentioned previously (see the results in subsection \S3.1). 

Second, and based on this comparison analysis, we have extended the application of the Gauss-Bonnet theorem (which we find in \cite{Che3} restricted to extrinsic balls on surfaces of Hyperbolic space) to the extrinsic balls in minimal surfaces in a Cartan-Hadamard manifold in order to obtain estimates for the Euler characteristic of these extrinsic domains (see the results in subsection \S3.2). 

Third, we present the following estimation of the Euler characteristic of an immersed surface
$$-\chi(S) = \lim_{t \to \infty} (-\chi(E_t))$$
for a suitable exhaustion of $S$ by extrinsic balls $\{E_t\}_{t>0}$ (see Theorem \ref{CorDifeo} in section \S.4). This is a key result which will allow us to argue in a similar way to the line taken in \cite{Ch2} and \cite{Che3}, even though our ambient manifold has no constant curvature. Thanks to the lower bound of the geodesic curvature of extrinsic spheres $\partial E_t$ and to the bound $\Vert A^S\Vert (q) <h_b(r(q))$ outside a compact, it is possible to show that the extrinsic distance to a fixed pole, defined on surface $S$, has no critical points outside a compact. Hence, we can apply classical Morse theory to conclude that, for an exhaustion of $S$ by extrinsic balls $\{E_t\}_{t>0}$, $\chi(E_t)$ is independent of $t$, for a sufficiently large $t$. Therefore $\chi(S)=\lim_{t \to \infty} \chi(E_t)$. When the ambient manifold is the Euclidean or the Hyperbolic space, the bound $\Vert A^S\Vert(q) <h_b(r(q))$ can be omitted because, in this case, the finiteness of the total extrinsic curvature implies that $\Vert A^S\Vert(q)$ goes to $0$ as the extrinsic distance $r(q)$ goes to infinity (for more details, see the proofs of Theorem 2.1 in \cite{O}, concerning cmi surfaces in $\Han$ and Theorem 4.1 in \cite{A1}, about cmi submanifolds in $\erre^n$).

Another appropriate observation at this point is the following: the upper bound $b$ on the sectional curvatures of the ambient manifold $N$ must be strictly negative, because if we use the Euclidean space as a model, the volume of the extrinsic balls $v(t)=\Vol(E_t)$ is not balanced by a function of exponential growth but by the volume function $\Vol(B^{0,2}_t)=\pi t^2$ with slower parabolic growth, and hence the techniques used do not guarantee that $\Sup_{t>0}\frac{\operatorname{Vol}(E_{t})}{\operatorname{Vol}(B_{t}^{0,2}
)}<+\infty$.


To illustrate the meaning of the expression \lq\lq not too curved on average with respect to the hyperbolic space", we will refer to Cartan-Hadamard manifolds, which are asymptotic to Hyperbolic space $\Han$ in a sense that we define below in the following Definition \ref{asymptLoc} (see \cite{Shi}). 

\begin{definition}\label{asymptLoc}
Let us consider $N^n$ a complete non-compact Riemannian manifold with a pole $o \in N$. Then $N$ is {\em asymptotically locally $b$-hyperbolic} of order $\alpha$ (abbreviated as $\alpha$-ALH) if and only if $\vert K_N(x)-b\vert=O(e^{-\alpha r(x)})$, where $K_N(x)$ is the sectional curvature of $N$ at $x \in N$ of the radial planes from the pole $o$ and $r(x)=dist_N(o,x)$ is the distance function from the pole $o\in N$.
\end{definition}

These ambient manifolds satisfy hypothesis (\ref{Hyp2}) of Theorem \ref{thmMain1}, so we have the second of our main results, Theorem \ref{thmMain2}.

\begin{theorem}\label{thmMain2}
Let $S^{2}$ be a properly immersed minimal surface in
a  Cartan-Hadamard manifold $N$ which is asymptotically locally $b$-hyperbolic of order $2$ and with sectional curvatures bounded from above by a negative quantity $K_N \leq b <0$.

Let us suppose that $\Vert A^S\Vert(q) < h_b(r(q))$ outside a compact set $K \subset S$ and that

\begin{equation}
\int_{S}\Vert A^{S}\Vert ^{2}d\sigma<+\infty
\end{equation}

\noindent where $A^{S}$ denotes the second fundamental form of $S$ in $N$.

Then:

\begin{enumerate}
\item $\Sup_{t>0}\frac{\operatorname{Vol}(E_{t})}{\operatorname{Vol}(B_{t}^{b,2}
)}<+\infty,$ 
\item $S^2$ has finite topological type,

\item $-\chi(S)\leq\frac{1}{4\pi}
\int_{S}\Vert A^{S}\Vert ^{2}d\sigma-\Sup_{t>0}\frac{\operatorname{Vol}
(E_{t})}{\operatorname{Vol}(B_{t}^{b,2})}+\frac{1}{2\pi}\int_{S}(b-K_{N})d\sigma. \\$
\end{enumerate}
\end{theorem}

To conclude we have the following generalization of Theorem 3 in \cite{Ch2}.

\begin{theorem}\label{thmMain3}
\label{thmMain3} Let $S^{2}$ be a properly immersed minimal surface in
a Cartan-Hadamard manifold $N$, with sectional curvatures bounded from above by a negative quantity $K_N \leq b <0$. Let us consider an exhaustion of $S$ by a family of nested extrinsic balls $\{E_t=\{x \in S/r(x)\leq t\}\}_{t>0}$, where $r$ is the distance to a fixed pole $o \in S$. Let us suppose that $\lim_{t \to \infty} \frac{\int_{E_t} \cosh r d\sigma}{\cosh^2 t} =\frac{\pi}{-b}$.\\

 (i) Then, $S$ is a minimal cone in $N$ and $\chi(S) =1$.
  
 (ii)  If $N=\Han$, then $S$ is totally geodesic (and we have Theorem 3 in \cite{Ch2}).

\end{theorem}

\subsection{Outline of the paper}
The outline of the paper is as follows. In section \S.2 we present the basic definitions and facts about the extrinsic distance restricted to a submanifold, and about the rotationally symmetric spaces used as a model for comparison purposes. In section \S.3 we present the basic results concerning the Hessian comparison theory of restricted distance function we are going to use, obtaining as a corollary an estimate of the geodesic curvature of the boundary of the extrinsic balls covering the surface and, hence, an estimation of the Euler characteristic of such extrinsic balls. Section \S.4 presents the monotonicity property satisfied by the extrinsic balls and the estimation of the Euler characteristic of the surface in terms of the Euler characteristic of the extrinsic balls. Section \S.5 is devoted to the proof of Theorem \ref{thmMain1}, section \S.6 to the proof of Theorem \ref{thmMain2}, and section \S.7 to the proof of Theorem \ref{thmMain3}.


\section{Preliminaries}

\subsection{Curvature restrictions and extrinsic balls}\label{PrelimOne} We assume throughout the paper that $\varphi: S \longrightarrow N$ is a complete, proper and minimal immersion of a non-compact surface $S$ in a Cartan-Hadamard manifold $N$. Throughout the paper, we identify $\varphi(S)\equiv S$ and $\varphi(x)\equiv x$ for all $ x \in S$. We also assume that the Cartan-Hadamard manifold $N^{n}$ has 
sectional curvatures bounded from above by a negative bound $K_N \leq b <0$. All the
points in these manifolds are poles. Recall that a pole is a point $o$ such
that the exponential map $\exp_{o}\colon T_{o}N^{n}\rightarrow N^{n}$ is a
diffeomorphism. For every $x\in N^{n}\setminus\{o\}$ we define
$r(x)=\operatorname{dist}_{N}(o,x)$, and this distance is realized by the
length of a unique geodesic from $o$ to $x$, which is the \textit{radial
geodesic from $o$}. We also denote by $r$ the restriction $r|_{S}
:S\rightarrow\mathbb{R}_{+}\cup\{0\}$. This restriction is called the
\emph{extrinsic distance function} from $o$ in $S$. The gradients of $r$
in $N$ and $S$ are denoted by $\nabla^{N}r$ and $\nabla^{S}r$, respectively.
Let us remark that $\nabla^{S}r(x)$ is just the tangential component of
$\nabla^{N}r(x)$ in $S$, for all $x\in S$. Then we have the following basic
relation:
\begin{equation}
\nabla^{N}r=\nabla^{S}r+(\nabla^{N}r)^{\bot} \label{radiality}
\end{equation}
where $(\nabla^{N}r)^{\bot}(x)=\nabla^{\bot}r(x)$ is perpendicular to $T_{x}S$
for all $x\in S$.

\begin{definition}
\label{defRadCurv} Let $o$ be a point in a Riemannian manifold $M$ and let
$x\in M\setminus\{o\}$. The sectional curvature $K_{M}(\sigma_{x})$ of the
two-plane $\sigma_{x}\in T_{x}M$ is then called an \textit{$o$-radial
sectional curvature} of $M$ at $x$ iff $\sigma_{x}$ contains the tangent vector
to a minimal geodesic from $o$ to $x$. We also denote these curvatures by
$K_{o,M}(\sigma_{x})$.
\end{definition}

\begin{definition}
\label{ExtBall} Given a connected and complete surface $S$ in a
Cartan-Hadamard manifold $N^{n}$, we denote the \emph{extrinsic metric balls}
of radius $R$ and center $o\in N$ by $E_{R}(o)$. They are defined as the
intersection
\[
E_R=B^N_{R}(o)\cap S=\{x\in S\colon r(x)<R\}
\]
where $B^N_{R}(o)$ denotes the open geodesic ball of radius $R$ centered at the
pole $o$ in $N^{n}$.
\end{definition}

\begin{remark}\label{theRemk0} It should be pointed out that the extrinsic domains $E_{R}(o)$
are precompact sets (because the submanifold $S$ is properly immersed), with
a smooth boundary $\partial E_R=\Gamma_{R}(o)= \{x\in S\colon r(x)=R\}$. The assumption on the smoothness of $\Gamma_{R}(o)$ makes no restriction. Indeed, the distance function $r$ is
smooth in $N^{n}\setminus\{o\}$, since $N^{n}$ is assumed to possess a pole
$o\in N^{n}$. Hence the restriction $r|_{S}$ is smooth in $S$ and consequently
the radii $R$ that produce smooth boundaries $\Gamma_{R}(o)$ are dense in
$\mathbb{R}$ by Sard's theorem and the Regular Level Set Theorem.
\end{remark}

\begin{remark}\label{theRemk1}
When the surface $S$ is totally geodesic in the ambient manifold $N$, the extrinsic $R$-balls become geodesic balls in $S$, $B^S_R$ , and its boundaries are the distance spheres $\partial B^S_R$. On the other hand, when $S$ is a totally geodesic hyperbolic plane in the Hyperbolic space form $\Han$, the extrinsic $R$-ball $E_R$ becomes the geodesic $R$-ball $B^{b,2}_R$ in $\Hatwo$, with boundary $S^{b,1}_R$, the geodesic $R$-sphere in $\Hatwo$.
\end{remark}

For the sake of completeness, we are going to state the co-area formula in these preliminaries. To do so, we shall consider a proper $C^{\infty}$ function $f:M \longrightarrow \erre$ defined on a Riemannian manifold $M$. The set of critical values of $f$ is a null set of $\erre$ and the set of regular values $O$ is an open subset of $\erre$. Then, for $t \in O$, $f^{-1}(t)=\Gamma_t=\{p\in M :f(p)=t\}$ is a compact hypersurface of $M$ and, given $q \in \Gamma_t$, $\nabla^Mf(q)$ is perpendicular to $\Gamma_t$. We define $\Omega_t=\{p \in M: f(p) \leq t\}$ and $v(t)= \Vol(\Omega_t)$. Then

\begin{theoremA}[See \cite{S}, Theorem 5.8]\label{coarea}

Let $M$ be
a Riemannian manifold. Let $f$ be a proper $C^{\infty}$ function defined on
$M$. For an integrable function $u$ on $M$ the following hold:\newline

\begin{enumerate}
\item Let $g_{t}$ be the induced metric on $\Gamma_{t}:=\{p\in M;f(p)=t\}$
from $g$. Then
\[
\int_{M}u\Vert \nabla f\Vert d\nu_{g}=\int_{-\infty}^{+\infty
}dt\int_{\Gamma_{t}}u~d\nu_{g_{t}}
\]

\item The function $t\rightarrow v(t)$ is a $C^{\infty}$
function at regular values $t$ of $f$ such that $V(t)<+\infty$, and
\[
\frac{d}{dt}v(t)=\int_{\Gamma_{t}}\Vert \nabla f\Vert ^{-1}
d\nu_{g_{t}}
\]

\end{enumerate}
\end{theoremA}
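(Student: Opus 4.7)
The plan is to establish (1) by a local Fubini computation in coordinates adapted to the fibration of $M\setminus\operatorname{Crit} f$ by level sets of $f$, and then deduce (2) by a fundamental-theorem-of-calculus argument. First I would dispose of the critical set: by Sard's theorem, the image $f(\operatorname{Crit} f)\subset\erre$ of critical points has Lebesgue measure zero, so its contribution to the outer integral on the right-hand side of (1) vanishes, while on the left-hand side every point of $\operatorname{Crit} f$ satisfies $\Vert \nabla f\Vert=0$, killing the integrand $u\Vert\nabla f\Vert$ pointwise there. Hence it suffices to establish (1) with $M$ replaced by the open regular set $U:=M\setminus\operatorname{Crit} f$.

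Second, around any regular point $p\in f^{-1}(t_{0})\cap U$ the vector field $X:=\nabla f/\Vert\nabla f\Vert^{2}$ is well defined and satisfies $X(f)=1$, so its flow $\varphi_{s}$ translates level sets: $f\circ\varphi_{s}=f+s$. Picking a small neighborhood $W_{0}\subset\Gamma_{t_{0}}$ of $p$ and setting $\Phi(q,s):=\varphi_{s}(q)$ on $W_{0}\times(-\varepsilon,\varepsilon)$, one observes that $X$ is parallel to $\nabla f$ and hence orthogonal to the level sets, so the Gram matrix of the pullback metric is block-diagonal with transverse block $\Vert X\Vert^{2}=\Vert \nabla f\Vert^{-2}$. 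Taking determinants yields
\begin{equation*}
\Phi^{*}\,d\nu_{g}\;=\;\Vert\nabla f\Vert^{-1}\,d\nu_{g_{t_{0}+s}}\,ds,
\end{equation*}
where $d\nu_{g_{t_{0}+s}}$ denotes the induced measure on $\Phi(W_{0},s)\subset\Gamma_{t_{0}+s}$. Fubini on $W_{0}\times(-\varepsilon,\varepsilon)$ gives (1) locally, and a countable partition of unity on $U$ subordinate to such adapted patches, combined with monotone convergence on the positive and negative parts of $u$, globalizes the identity.

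For (2), apply (1) with the integrand $\Vert\nabla f\Vert^{-2}\cdot\mathds{1}_{\Omega_{t}}$ (approximating $\mathds{1}_{\Omega_{t}}$ by smooth cutoffs if one prefers a strictly smooth test function) to obtain
\begin{equation*}
v(t)\;=\;\int_{-\infty}^{t}\!\!\left(\int_{\Gamma_{s}}\Vert\nabla f\Vert^{-1}\,d\nu_{g_{s}}\right)ds.
\end{equation*}
At a regular value $t$ with $v(t)<\infty$, the local tubular description exhibits $\Gamma_{s}$ as a smooth family of hypersurfaces for $s$ in a neighborhood of $t$, so the inner integral is a smooth function of $s$ near $t$; differentiating the outer integral then yields the stated formula for $v'(t)$, and iterating gives $C^{\infty}$ smoothness at every regular value.

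The main obstacle lies in the clean handling of the critical set: the preimage $f^{-1}(f(\operatorname{Crit} f))$ may well carry positive $d\nu_{g}$-measure, so the reduction to $U$ depends on the pointwise vanishing of $\Vert\nabla f\Vert$ on $\operatorname{Crit} f$ rather than on any measure-zero property of the preimage itself. Apart from that, the Jacobian identity above is a short linear-algebra computation, but it is the step most prone to error and hinges essentially on the orthogonality of $X$ to the level sets and the exhaustion of a tubular neighborhood by the flow $\varphi_{s}$.
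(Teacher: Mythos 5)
The paper itself offers no proof of this statement --- it is quoted from Sakai \cite{S} --- so your proposal can only be compared with the standard argument, which is essentially what you follow for part (1): discard the critical set using Sard's theorem on the $t$-side and the pointwise vanishing of $u\Vert\nabla f\Vert$ on the $M$-side, fibre the regular set by the flow of $\nabla f/\Vert\nabla f\Vert^{2}$, compute the block-diagonal Gram determinant to get $\Phi^{*}d\nu_{g}=\Vert\nabla f\Vert^{-1}d\nu_{g_{t_0+s}}\,ds$, and globalize with a countable partition of unity and monotone convergence. That part is correct.

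The genuine gap is in your deduction of (2). The identity you display, $v(t)=\int_{-\infty}^{t}\bigl(\int_{\Gamma_{s}}\Vert\nabla f\Vert^{-1}d\nu_{g_{s}}\bigr)ds$, does not follow from (1) and is in general false. To write $\mathds{1}_{\Omega_{t}}$ as $u\Vert\nabla f\Vert$ you must take $u=\mathds{1}_{\Omega_{t}}\Vert\nabla f\Vert^{-1}$ (not $\Vert\nabla f\Vert^{-2}$, a separate slip), and this $u$ is undefined on $\operatorname{Crit}f$; extending it by zero there, (1) only yields $\operatorname{Vol}(\Omega_{t}\setminus\operatorname{Crit}f)=\int_{-\infty}^{t}\int_{\Gamma_{s}}\Vert\nabla f\Vert^{-1}d\nu_{g_{s}}\,ds$. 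When $\operatorname{Crit}f$ has positive measure (a plateau of $f$, fully compatible with properness) the two sides of your display differ by $\operatorname{Vol}(\Omega_{t}\cap\operatorname{Crit}f)>0$ --- precisely the phenomenon you flag in your closing paragraph, but only for part (1). The conclusion of (2) can be repaired: since $f$ is proper the set of regular values is open, so for $s$ near a regular value $t$ no critical values are crossed and $\operatorname{Vol}(\Omega_{s}\cap\operatorname{Crit}f)$ is constant in $s$; hence $v(s)$ differs from $\operatorname{Vol}(\Omega_{s}\setminus\operatorname{Crit}f)$ by a constant near $t$, and differentiating the corrected identity, together with the compact collar $f^{-1}((t-\delta,t+\delta))\cong\Gamma_{t}\times(t-\delta,t+\delta)$ furnished by the flow (which gives smoothness of $s\mapsto\int_{\Gamma_{s}}\Vert\nabla f\Vert^{-1}d\nu_{g_{s}}$, using compactness of $\Gamma_t$ from properness), gives $v'(t)=\int_{\Gamma_{t}}\Vert\nabla f\Vert^{-1}d\nu_{g_{t}}$ and the $C^{\infty}$ statement. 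Note also that this auxiliary $u$ need not be integrable, so at this step you should invoke the nonnegative (monotone-convergence) version of (1) that your globalization already establishes, rather than smooth cutoffs.
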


\begin{remark} Let us consider an exhaustion of $S$ by a family of nested extrinsic balls $\{E_t\}_{t>0}$, centered at a pole $o \in N$.
To apply the co-area formula in this setting, we consider the surface $S$ as the Riemannian manifold 
and the function $f$ in the above statement is the extrinsic distance from the pole $f=r$. Hence, each extrinsic ball $E_t=\Omega_t$ , the extrinsic spheres are the curves $\partial E_t=\Gamma_t=\{x \in S/r(x)=t\}$, and $v(t)=\Vol(E_t)$ is the volume function.
\end{remark}


\subsection{Warped products and model spaces}\label{subsecWarp}

Warped products are generalized manifolds of revolution. We refer to \cite{O'N} for more information about these spaces. 



\begin{definition}
[See \cite{GW}, \cite{Gri}]\label{defModel} A $w-$model $M_{w}^{m}$ is a
smooth warped product $$M_{w}^{m}=[0,\Lambda[\times_w  \mathbb{S}^{m-1}_{1}$$
\noindent with base $B^{1} = [0,\Lambda[ \,\subset\mathbb{R}$
(where $0 < \Lambda\leq\infty$), fiber $F^{m-1} = \mathbb{S}^{m-1}_{1}$ (i.e.,
the unit $(m-1)$-sphere with standard metric), and warping function
$w\colon[0,\Lambda[ \to\mathbb{R}_{+}\cup\{0\}$, with $w(0) = 0$, $w^{\prime
}(0) = 1$, and $w(r) > 0$ for all $r > 0$. The point $o_{w} = \pi^{-1}(0)$,
where $\pi$ denotes the projection onto $B^{1}$, is called the \emph{center
point} of the model space. If $\Lambda= \infty$, then $o_{w}$ is a pole of
$M_{w}^{m}$.
\end{definition}

\begin{proposition}[See \cite{Gri}, \cite{O'N}]
\label{propSpaceForm} The simply connected space forms $\mathbb{K}^{n}(b)$ of
constant curvature $b$ are $w_b-$models with warping functions
\begin{equation}
w_{b}(r)=
\begin{cases}
\frac{1}{\sqrt{b}}\sin(\sqrt{b}\,r) & \text{if $b>0$}\\
r & \text{if $b=0$}\\
\frac{1}{\sqrt{-b}}\sinh(\sqrt{-b}\,r) & \text{if $b<0$}.
\end{cases}
\end{equation}
Note that for $b>0$ the function $w_{b}(r)$ admits a smooth extension to
$r=\frac{\pi}{\sqrt{b}}$. 

\end{proposition}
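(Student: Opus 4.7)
The plan is to build the identification $\mathbb{K}^n(b) = [0,\Lambda) \times_{w_b} \mathbb{S}^{n-1}_1$ using geodesic polar coordinates at a fixed point $o \in \mathbb{K}^n(b)$ and then identifying the warping function via the Jacobi equation. Since $\mathbb{K}^n(b)$ is simply connected and has constant curvature $b$, it is either complete (for $b \leq 0$, giving a pole at every point) or has injectivity radius $\pi/\sqrt{b}$ at every point (for $b > 0$). In either case the exponential map $\exp_o \colon B(0,\Lambda) \subset T_o \mathbb{K}^n(b) \to \mathbb{K}^n(b)$ is a diffeomorphism onto its image, where $\Lambda = \infty$ if $b \leq 0$ and $\Lambda = \pi/\sqrt{b}$ if $b > 0$.

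First I would introduce polar coordinates $(r,\theta) \in (0,\Lambda) \times \mathbb{S}^{n-1}_1$ via $\exp_o(r\theta)$ and invoke the Gauss lemma to write the metric as
\begin{equation*}
g = dr^2 + g_r,
\end{equation*}
where $g_r$ is a family of Riemannian metrics on $\mathbb{S}^{n-1}_1$. The content of the proposition is that $g_r = w_b(r)^2 g_{\mathbb{S}^{n-1}_1}$, i.e.\ that $g_r$ is a constant rescaling of the round metric.

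Next, I would use Jacobi field analysis. For each unit $\theta \in \mathbb{S}^{n-1}_1$ and each $v \in T_\theta \mathbb{S}^{n-1}_1$, the curve $s \mapsto \exp_o(r(\theta + sv))$ produces a Jacobi field $J_v(r)$ along the radial geodesic $\gamma_\theta(r) = \exp_o(r\theta)$, vanishing at $r=0$ with $J_v'(0) = v$. Because the sectional curvature is the constant $b$, the Jacobi equation $J'' + R(J,\dot\gamma)\dot\gamma = 0$ reduces, for $J \perp \dot\gamma$, to the scalar ODE
\begin{equation*}
f''(r) + b\, f(r) = 0, \qquad f(0) = 0, \quad f'(0) = 1,
\end{equation*}
whose unique solution is precisely $w_b(r)$ in each of the three cases. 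Parallel-translating an orthonormal frame of $\theta^\perp$ along $\gamma_\theta$ diagonalizes the Jacobi equation, so every perpendicular Jacobi field with the above initial conditions has norm $w_b(r)$. Consequently $|J_v(r)|_g = w_b(r)|v|_{\mathbb{S}^{n-1}_1}$, which gives $g_r = w_b(r)^2 g_{\mathbb{S}^{n-1}_1}$, as required.

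Finally, I would verify the model axioms of Definition \ref{defModel}: $w_b(0) = 0$ and $w_b'(0) = 1$ are immediate from the initial conditions, $w_b(r) > 0$ on $(0,\Lambda)$ by inspection of each formula, and the diffeomorphism $\exp_o$ identifies $o$ with the center point $o_{w_b} = \pi^{-1}(0)$. For $b \leq 0$ we have $\Lambda = \infty$, so $o$ is a pole in the sense of the definition, consistent with the Cartan–Hadamard setting. For $b > 0$ the warping function $w_b(r) = \frac{1}{\sqrt{b}}\sin(\sqrt{b}\,r)$ vanishes at $r = \pi/\sqrt{b}$, and the smooth extension there corresponds geometrically to the antipodal point of $o$ in $\mathbb{K}^n(b) = \mathbb{S}^n(1/\sqrt{b})$; this is a routine check in charts around the antipode and is the only place where some care is needed, as it requires patching the polar coordinate description with a second chart at the antipodal point to confirm smoothness of the warped product structure globally on $\mathbb{K}^n(b)$.
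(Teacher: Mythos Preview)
Your argument is correct and is the standard route via geodesic polar coordinates and the Jacobi equation in constant curvature. The paper itself does not supply a proof of this proposition; it simply states it as a known fact with references to Grigor'yan and O'Neill, and your sketch is precisely the kind of computation one finds in those sources.
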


\begin{proposition}
[See \cite{GW}, \cite{Gri} and \cite{O'N}]\label{propWarpMean} Let
$M_{w}^{m}$ be a $w-$model with warping function $w(r)$ and center $o_{w}$.
The distance sphere of radius $r$ and center $o_{w}$ in $M_{w}^{m}$ is the
fiber $\pi^{-1}(r)$. This distance sphere has the constant mean curvature
$\eta_{\omega}(r)=\frac{w^{\prime}(r)}{w(r)}$. On the other hand, the
$o_{w}$-radial sectional curvatures of $M_{w}^{m}$ at every $x\in\pi^{-1}(r)$
(for $r>0$) are all identical and determined by
\begin{equation}
K_{o_{w},M_{w}}(\sigma_{x})=-\frac{w^{\prime\prime}(r)}{w(r)}.
\end{equation}
\end{proposition}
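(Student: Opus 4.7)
The plan is to exploit the warped product structure $g = dr^2 + w(r)^2 g_{\mathbb{S}^{m-1}}$ directly, reducing the proposition to two elementary connection computations. As a preliminary step I would verify that the fiber $\pi^{-1}(r)$ is exactly the distance sphere of radius $r$ about $o_w$: because $\partial_r$ has unit length and $\nabla_{\partial_r}\partial_r = 0$, every curve $s\mapsto(s,p)$ is a unit-speed geodesic issuing from $o_w$, so the normalisations $w(0)=0$ and $w'(0)=1$ force $\dist_{M_w}(o_w,(r,p)) = r$ for every $p\in\mathbb{S}^{m-1}$.

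For the mean curvature I would compute the shape operator of $\pi^{-1}(r)$ with respect to the inward unit normal $-\partial_r$. For horizontal lifts $X,Y$ of vector fields on the sphere one has $[X,\partial_r] = 0$, $\langle X,\partial_r\rangle = 0$, and $\langle X,Y\rangle = w(r)^2\langle X,Y\rangle_{\mathbb{S}^{m-1}}$, so Koszul's formula collapses to
\[
\langle \nabla_X \partial_r, Y\rangle = \tfrac{1}{2}\partial_r\langle X,Y\rangle = \frac{w'(r)}{w(r)}\langle X,Y\rangle.
\]
Hence the shape operator is the multiple $(w'(r)/w(r))\,\mathrm{Id}$ of the identity, and the mean curvature is the constant $\eta_w(r) = w'(r)/w(r)$, consistent with the values $h_b(t)$ of Remark \ref{theRemk1}.

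For the radial sectional curvature $K_{o_w,M_w}(\sigma_x)$, I would take a unit vertical vector $X$ spanning $\sigma_x$ together with $\partial_r$ and apply the curvature tensor. Using $[X,\partial_r] = 0$, $\nabla_{\partial_r}\partial_r = 0$, and the identity $\nabla_{\partial_r} X = \nabla_X \partial_r = (w'/w)X$ established above, I compute
\[
R(X,\partial_r)\partial_r = -\nabla_{\partial_r}\!\left(\tfrac{w'(r)}{w(r)} X\right) = -\frac{w''(r)}{w(r)}\,X,
\]
after the cancellation of the two terms produced by the product rule. Pairing with $X$ then yields $K_{o_w,M_w}(\sigma_x) = -w''(r)/w(r)$, independent of the point chosen on the fiber.

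The main obstacle, and the point deserving most care, is the bookkeeping of sign conventions (outward vs.\ inward unit normal, and the sign convention for the curvature tensor $R$): these must be chosen to match Remark \ref{theRemk1} and to recover sectional curvature $b$ on each simply connected space form $\mathbb{K}^n(b)$ of Proposition \ref{propSpaceForm}, which is the consistency check I would carry out to close the argument.
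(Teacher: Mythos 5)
Your computation is correct; the paper itself gives no proof of Proposition \ref{propWarpMean}, quoting it from \cite{GW}, \cite{Gri} and \cite{O'N}, and your Koszul-formula derivation of the shape operator $(w'/w)\,\mathrm{Id}$ and of the identity $R(X,\partial_r)\partial_r=-\frac{w''}{w}X$ is exactly the standard warped-product argument of those references, with the right consistency checks against Remark \ref{theRemk1} and Proposition \ref{propSpaceForm}. The only point to phrase carefully is that a field $X$ satisfying $[X,\partial_r]=0$ is a lift of a fixed spherical field and so cannot be of unit length along the whole radial geodesic; normalize it at the point $x$ in question (or divide by $\Vert X\Vert^{2}$ when passing to the sectional curvature), after which your cancellation of the two product-rule terms and the conclusion go through as stated.
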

\begin{remark}
Note that, for the space forms $\kan$, $\eta_{\omega_{b}}(r)=h_b(r)$.
\end{remark}

\section{Hessian analysis, Gauss-Bonnet Theorem, and estimates for the Euler characteristic of the extrinsic balls}


\subsection{Hessian and Laplacian comparison analysis}\label{subsecLap} 

We now assume that $S^{2}$ is a complete, non-compact,
and properly immersed surface (not necessarily minimal) in a Riemannian manifold $N^{n}$ that possesses a pole $o$. 

The 2nd order analysis of the restricted distance function
$r_{|_{S}}$ is governed
by the Hessian comparison Theorem A in \cite{GW}:

\begin{theoremA}[See \cite{GW}, Theorem A]\label{thmGreW} Let $N=N^{n}$ be a manifold with a
pole $o$, let $M=M_{w}^{m} $ denote a $w-$model with center $o_{w}$, and $m
\leq n$. Suppose that every $o$-radial sectional curvature at $x \in N
\setminus\{o\}$ is bounded from above by the $o_{w}$-radial sectional
curvatures in $M_{w}^{m}$ as follows:
\[
K_{o, N}(\sigma_{x}) \geq\, (\leq)\, -\frac{w^{\prime\prime}(r)}{w(r)}
\]
for every radial two-plane $\sigma_{x} \in T_{x}N$ at distance $r = r(x) =
\operatorname{dist}_{N}(o, x)$ from $o$ in $N$. Then the Hessian of the
distance function in $N$ satisfies
\begin{equation}
\label{eqHess}\begin{aligned} \operatorname{Hess}^{N}(r(x))(X, X) &\leq\,(\geq)\, \operatorname{Hess}^{M}(r(y))(Y, Y)\\ &= \eta_{w}(r)\left(1 - \langle \nabla^{M}r(y), Y \rangle_{M}^{2} \right) \\ &= \eta_{w}(r)\left(1 - \langle \nabla^{N}r(x), X \rangle_{N}^{2} \right) \end{aligned}
\end{equation}
for every unit vector $X$ in $T_{x}N$ and for every unit vector $Y$ in
$T_{y}M$ with $\,r(y) = r(x) = r\,$ and $\, \langle\nabla^{M}r(y), Y
\rangle_{M} = \langle\nabla^{N}r(x), X \rangle_{N}\,$.
\end{theoremA}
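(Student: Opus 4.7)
The plan is to reduce the problem to the Jacobi-field / index-form machinery on minimizing radial geodesics from the poles. Since $r$ is a distance function, $\Vert\nabla^{N}r\Vert\equiv 1$ off $o$, and hence $\Hess^{N}(r)(\nabla^{N}r,\cdot)\equiv 0$; decomposing $X = X^{T}+\langle X,\nabla^{N}r\rangle_{N}\,\nabla^{N}r$ with $X^{T}\perp\nabla^{N}r$, and similarly $Y = Y^{T}+\langle Y,\nabla^{M}r\rangle_{M}\,\nabla^{M}r$, the hypothesis $\langle\nabla^{M}r,Y\rangle_{M}=\langle\nabla^{N}r,X\rangle_{N}$ gives $\vert X^{T}\vert=\vert Y^{T}\vert$, and the inequality to prove reduces to the corresponding inequality for $\Hess^{N}(r)(X^{T},X^{T})$ versus $\Hess^{M}(r)(Y^{T},Y^{T})$, since both Hessians vanish on the radial direction.

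Because $o$ is a pole, the minimizing radial geodesic $\gamma\colon[0,r]\to N$ from $o$ to $x$ has no conjugate points, so there is a unique Jacobi field $J^{N}$ along $\gamma$ with $J^{N}(0)=0$ and $J^{N}(r)=X^{T}$, and the standard identity $\Hess^{N}(r)(X^{T},X^{T})=I^{N}(J^{N},J^{N})$ holds, where $I^{N}$ denotes the index form of $\gamma$ on $[0,r]$. The core step is a transplantation argument: choose parallel unit extensions $E^{N}, E^{M}$ of $X^{T}/\vert X^{T}\vert$ and $Y^{T}/\vert Y^{T}\vert$ along $\gamma$ and along the corresponding radial geodesic $\tilde\gamma\subset M$, respectively. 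Because $M=M_{w}^{m}$ is a warped product, the Jacobi field realizing the Hessian in $M$ has the explicit form $J^{M}(t)=\frac{w(t)}{w(r)}\vert Y^{T}\vert\,E^{M}(t)$. Define the comparison field on $\gamma$ by $V(t):=\frac{w(t)}{w(r)}\vert X^{T}\vert\,E^{N}(t)$; then $V$ and $J^{N}$ share their boundary values, and the absence of conjugate points along $\gamma$ yields the index-lemma inequality $I^{N}(J^{N},J^{N})\leq I^{N}(V,V)$.

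A direct computation using the parallelism of $E^{N},E^{M}$ and the fact that $\gamma',\tilde\gamma'$ are unit geodesic fields yields, with $f(t):=\frac{w(t)}{w(r)}\vert X^{T}\vert$,
\[
I^{N}(V,V) \;=\; \int_{0}^{r}\!\Bigl[f'(t)^{2} - f(t)^{2}\,K_{o,N}\bigl(\sigma_{\gamma'(t),\,E^{N}(t)}\bigr)\Bigr]\,dt,
\]
and, using $K_{o_{w},M_{w}^{m}}(\sigma)=-w''/w$ in the model,
\[
I^{M}(J^{M},J^{M}) \;=\; \int_{0}^{r}\!\Bigl[f'(t)^{2} + f(t)^{2}\tfrac{w''(t)}{w(t)}\Bigr]\,dt \;=\; \Hess^{M}(r)(Y^{T},Y^{T}).
\]
The hypothesis $K_{o,N}\geq -w''/w$ makes the first integrand pointwise $\leq$ the second, which chains with the index-lemma bound to deliver the asserted Hessian inequality; the reverse curvature bound yields the reverse Hessian bound by the symmetric argument. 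The closed-form right-hand side in the statement then follows from the identity $\Hess^{M}(r)=\eta_{w}(r)\bigl(g_{M}-dr\otimes dr\bigr)$ valid on any $w$-model (recorded in Proposition \ref{propWarpMean}) and from $\vert Y^{T}\vert^{2}=1-\langle\nabla^{M}r,Y\rangle_{M}^{2}=1-\langle\nabla^{N}r,X\rangle_{N}^{2}$.

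The main technical point I expect to require care is the index-form minimization of $J^{N}$ on the full interval $[0,r]$: this is precisely where the pole hypothesis on $N$ is essential, since it rules out conjugate points along $\gamma$ for any value of $r$ and removes the usual Rauch-type radius restriction. A minor ancillary check is that parallel transport preserves $E^{N}\perp\gamma'$ throughout $\gamma$, which is automatic since $\gamma'$ is itself parallel and orthogonal to $E^{N}$ at $t=r$.
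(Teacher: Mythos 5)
The paper never proves this statement---it is quoted verbatim from Greene--Wu \cite{GW} (and the subsequent Remark only discusses the dimension hypothesis)---so the comparison is with the classical proof you are reconstructing. Your reconstruction is the standard and correct one for the first direction: reduce to the component orthogonal to $\nabla^N r$ (the Hessian annihilates the radial direction), use $\operatorname{Hess}^N r(X^T,X^T)=I^N(J^N,J^N)$ for the Jacobi field $J^N$ with $J^N(0)=0$, $J^N(r)=X^T$ (no conjugate points since $o$ is a pole), transplant the explicit model field $V=\frac{w(t)}{w(r)}|X^T|E^N(t)$, and chain the index lemma in $N$ with the pointwise curvature comparison; the computation $I^M(J^M,J^M)=\frac{|Y^T|^2}{w(r)^2}\int_0^r (w'^2+ww'')\,dt=\eta_w(r)|Y^T|^2$ closing the argument is right.

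The one genuine gap is the sentence ``the reverse curvature bound yields the reverse Hessian bound by the symmetric argument.'' That direction ($K_{o,N}\leq -w''/w$ giving $\operatorname{Hess}^N\geq\operatorname{Hess}^M$, which is the direction actually used later in the paper via Proposition \ref{corLapComp}) is \emph{not} symmetric: there the index lemma must be applied in the model $M$, and the comparison field in $M$ has to be manufactured from the non-explicit Jacobi field $J^N$, which may rotate among all $n-1$ normal directions while $M$ has only $m-1\leq n-1$ of them. A componentwise transplantation therefore needs $m\geq n$ (this is exactly the dimension hypothesis in \cite{GW} that the paper's Remark after the statement addresses), whereas the statement here allows $m\leq n$. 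The fix is either the paper's own observation that $\operatorname{Hess}^{M_w}r$ is independent of $m$, so one may run the argument in the $n$-dimensional model with the same $w$, or a single-field transplantation $W(t)=|J^N(t)|\,E^M(t)$ together with Kato's inequality $\bigl|\frac{d}{dt}|J^N|\bigr|\leq\Vert\nabla_{\gamma'}J^N\Vert$, which gives $I^M(J^M,J^M)\leq I^M(W,W)\leq I^N(J^N,J^N)$. With that point made explicit, your proof is complete and coincides with the argument of the source the paper cites.
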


\begin{remark}
In \cite[Theorem A, p. 19]{GW}, the Hessian of $r_{M}$ is less than or equal to
the Hessian of $r_{N}$ provided that the radial curvatures of $N$ are bounded
from above by the radial curvatures of $M$ and provided that $\dim M \geq\dim
N$. But $\operatorname{Hess}^{M_{w}}(r(y))(Y, Y)$ \emph{do not} depend
on the dimension $m$, as we can easily see by computing it directly (see
\cite{Pa3}), so the hypothesis on the dimension can be overlooked in the
comparison among the Hessians in this case.
\end{remark}

As a consequence of this result, we have the following Laplacian inequalities (see \cite{MP1},  \cite{Pa3}, or \cite{HP} for detailed developments):

\begin{proposition}\label{corLapComp} 

Let $N^{n}$ be a manifold with a pole $o$, let $M_{w}^{m}$
denote a $w-$model with center $o_{w}$. Let us suppose that every $o$-radial sectional curvature at $x\in N-\{o\}$ is
bounded from above by the $o_{w}$-radial sectional curvatures in $M_{w}^{m}$
as follows:
\begin{equation}
\mathcal{K}(\sigma(x))\,=\,K_{o,N}(\sigma_{x})\leq-\frac{w^{\prime\prime}
(r)}{w(r)} \label{eqKbound}
\end{equation}
for every radial two-plane $\sigma_{x}\in T_{x}N$ at distance
$r=r(x)=\operatorname{dist}_{N}(o,x)$ from $p$ in $N$\medskip

Let $S^2$ be a properly immersed surface in $N$. Let us consider a modified-distance smooth function $f\circ r: S \longrightarrow \erre$. Then:\\

\noindent (A) For such a
smooth function $f(r)$ with $f^{\prime}(r)\leq0\,\,\text{for all}\,\,r$, (respectively $f^{\prime}(r)\geq0\,\,\text{for all}\,\,\,r$), and given
$X\in TqS$ unitary:
\begin{equation}\label{HessFunc1}
\begin{aligned}
\operatorname{Hess}^{S}(f\circ r)(X,X)\,&\leq\, (\geq)  \,(  \,f^{\prime
\prime}(r)-f^{\prime}(r)\eta_{w}(r)\,)  \langle X,\nabla^{N}r \rangle^{2}\\
  +f^{\prime}(r)(  \,\eta_{w}(r)&+\langle \,\nabla^{N}r,\,A^{S}(X,X)\,\rangle )
\end{aligned}
\end{equation}
\newline
(B) Tracing inequality (\ref{HessFunc1})
\begin{equation}\label{LapFunc1}
\begin{aligned}
\Delta^{S}(f\circ r)\,\leq\, (\geq)\, & \,\left(  \,f^{\prime\prime}(r)-f^{\prime
}(r)\eta_{w}(r)\,\right)  \Vert\nabla^{S}r\Vert^{2}\\
&  +mf^{\prime}(r)\left(  \,\eta_{w}(r)+\langle\,\nabla^{N}r,\,H^{S}
\,\rangle\,\right)  
\end{aligned}
\end{equation}
\noindent where $H^S$ denotes the mean curvature vector of $S$ in $N$.
\end{proposition}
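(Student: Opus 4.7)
The proof is essentially a direct computation: reduce the Hessian on $S$ to the Hessian on $N$ via the Gauss formula, then apply Theorem \ref{thmGreW} to bound the Hessian of the distance function in $N$.

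First I would record the fundamental identity relating the two Hessians. For any smooth $\phi:N\to\mathbb{R}$ and any $X\in T_qS$, the Gauss formula $\nabla^N_X Y = \nabla^S_X Y + A^S(X,Y)$ (with $Y$ tangent to $S$) yields
\begin{equation*}
\operatorname{Hess}^S(\phi|_S)(X,X) = \operatorname{Hess}^N(\phi)(X,X) + \langle \nabla^N\phi, A^S(X,X)\rangle.
\end{equation*}
Applying this to $\phi = f\circ r$ and using the chain rule for the ambient Hessian gives
\begin{equation*}
\operatorname{Hess}^S(f\circ r)(X,X) = f''(r)\langle X,\nabla^N r\rangle^2 + f'(r)\operatorname{Hess}^N(r)(X,X) + f'(r)\langle \nabla^N r, A^S(X,X)\rangle.
\end{equation*}

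Next I would invoke Theorem \ref{thmGreW}. Under the hypothesis $K_{o,N}(\sigma_x)\leq -w''(r)/w(r)$ we fall into the second case of that theorem, which produces
\begin{equation*}
\operatorname{Hess}^N(r)(X,X) \geq \eta_w(r)\bigl(1 - \langle \nabla^N r, X\rangle^2\bigr).
\end{equation*}
When $f'(r)\leq 0$, multiplying this inequality by $f'(r)$ flips its direction; substituting into the identity above and collecting the terms involving $\langle X,\nabla^N r\rangle^2$ yields precisely (\ref{HessFunc1}). The case $f'(r)\geq 0$ is identical, with no sign flip, producing the reverse inequality. The only point requiring attention is this sign bookkeeping; since $1-\langle\nabla^N r, X\rangle^2\geq 0$, each multiplication is unambiguous.

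The Laplacian estimate (\ref{LapFunc1}) then follows by tracing (\ref{HessFunc1}) over an orthonormal basis $\{e_1,\ldots,e_m\}$ of $T_qS$, using $\sum_i\langle e_i,\nabla^N r\rangle^2 = \Vert\nabla^S r\Vert^2$, which reflects that $\nabla^S r$ is the tangential projection of $\nabla^N r$, together with $\sum_i A^S(e_i,e_i) = m H^S$ from the definition of the mean curvature vector. No substantive obstacle is anticipated; the entire argument is a direct manipulation guided by Theorem \ref{thmGreW}, with the sign discipline on $f'$ being the only subtlety that must be tracked carefully.
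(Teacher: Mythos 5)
Your proof is correct, and it follows exactly the standard route that the paper itself relies on (the paper states Proposition \ref{corLapComp} without proof, deferring to the cited references): decompose $\operatorname{Hess}^{S}$ into $\operatorname{Hess}^{N}$ plus the second fundamental form term via the Gauss formula, apply the chain rule to $f\circ r$, invoke Theorem \ref{thmGreW} in the case $K_{o,N}\leq -w''/w$, and track the sign of $f'$ before tracing over an orthonormal basis of $T_qS$. Your sign bookkeeping and the identities $\sum_i\langle e_i,\nabla^N r\rangle^2=\Vert\nabla^S r\Vert^2$ and $\sum_i A^S(e_i,e_i)=mH^S$ are all as required, so there is nothing to add.
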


Another result we shall use concerning the radial functions defined on the surface is the following:

\begin{proposition}\label{lema 3.1Chen} Let $S^{2}$ be a complete, non-compact,
and properly immersed surface in a Cartan-Hadamard manifold $N^{n}$.
Let us consider $\{E_{t}\}_{t>0}$ an exhaustion of $S$ by extrinsic
balls. Let $f:S\rightarrow\mathbb{R}$ be a positive $C^{\infty}$ function.
Then
\[
\int_{S}e^{-\sqrt{-b}r(x)}~f(x)d\sigma<+\infty \,\,\,\, \text{if and only if}\,\, \int_{0}^{+\infty}e^{-\sqrt
{-b}t}\int_{E_{t}}f(x)~d\sigma~dt<+\infty
\]
\newline and when these integrals converge
\[
\int_{S}e^{-\sqrt{-b}r(x)}~f(x)d\sigma=\int_{0}^{+\infty}e^{-\sqrt{-b}t}
\int_{E_{t}}f(x)~d\sigma~dt
\]

\end{proposition}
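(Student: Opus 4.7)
The plan is to prove this by a direct Fubini--Tonelli argument, exploiting that the integrand is non-negative and that the family $\{E_t\}_{t>0}$ is an exhaustion characterised by a very simple condition on the extrinsic distance, namely $x\in E_t \iff r(x)<t$.

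First, I would rewrite the right-hand side as a double integral over $(0,+\infty)\times S$ by replacing the integration over $E_t$ with an integration over $S$ against the indicator $\mathbf{1}_{\{r(x)<t\}}$:
\begin{equation*}
\int_{0}^{+\infty}e^{-\sqrt{-b}\,t}\int_{E_{t}}f(x)\,d\sigma\,dt
=\int_{0}^{+\infty}\!\!\int_{S} e^{-\sqrt{-b}\,t}\,f(x)\,\mathbf{1}_{\{r(x)<t\}}\,d\sigma\,dt.
\end{equation*}
Since $f\geq 0$ and $e^{-\sqrt{-b}\,t}\geq 0$, the integrand is non-negative and measurable on the product, so Tonelli's theorem allows me to exchange the order of integration without any integrability hypothesis; this is precisely the step that yields the ``if and only if'' statement, since finiteness of one iterated integral is equivalent to finiteness of the other.

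Next, after swapping, the inner integral becomes the elementary
\begin{equation*}
\int_{r(x)}^{+\infty} e^{-\sqrt{-b}\,t}\,dt = \frac{1}{\sqrt{-b}}\,e^{-\sqrt{-b}\,r(x)},
\end{equation*}
so I would obtain
\begin{equation*}
\int_{0}^{+\infty}e^{-\sqrt{-b}\,t}\int_{E_{t}}f(x)\,d\sigma\,dt
=\frac{1}{\sqrt{-b}}\int_{S} e^{-\sqrt{-b}\,r(x)}\,f(x)\,d\sigma,
\end{equation*}
from which both the equivalence of finiteness and the claimed equality (up to the harmless constant $\sqrt{-b}$, which one may absorb into a normalization) follow immediately.

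The only mild technical point, rather than a genuine obstacle, is to verify the measurability of $(t,x)\mapsto \mathbf{1}_{\{r(x)<t\}}$ on $(0,+\infty)\times S$; this is automatic because $r$ is continuous (indeed smooth, as noted in Remark \ref{theRemk0}), so the set $\{(t,x) : r(x)<t\}$ is open in the product. The fact that extrinsic balls $E_t$ may fail to be connected, explicitly allowed in the statement, plays no role whatsoever in the argument, since Tonelli only requires measurability of sub-level sets. Thus the whole proof reduces to one application of Tonelli plus the computation of an elementary one-variable integral.
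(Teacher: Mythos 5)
Your Tonelli argument is correct, and it is a genuinely different route from the paper's proof: the paper applies the co-area formula twice (to write $\int_{E_t}e^{-\sqrt{-b}r}f\,d\sigma$ as an iterated integral over the level curves $\Gamma_s$, and to identify $\frac{d}{ds}\int_{E_s}f\,d\sigma$ with $\int_{\Gamma_s}f\,\Vert\nabla^S r\Vert^{-1}d\mu$), integrates by parts in $s$, and then lets $t\to\infty$, discarding the boundary term $e^{-\sqrt{-b}t}\int_{E_t}f\,d\sigma$ by a nonnegativity argument. Your single application of Tonelli to the open set $\{(t,x):r(x)<t\}$ buys several things: no smoothness of $f$ or of the volume function is needed, critical points of $r$ and the co-area machinery play no role, and you avoid the delicate limit of the boundary term, which the paper writes as $\bigl(\lim_{t\to\infty}e^{-\sqrt{-b}t}\bigr)\int_S f\,d\sigma$, an indeterminate $0\cdot\infty$ when $\int_S f\,d\sigma=\infty$; in your setting the tail bound $e^{-\sqrt{-b}t}\int_{E_t}f\,d\sigma\leq\sqrt{-b}\int_t^{\infty}e^{-\sqrt{-b}s}\int_{E_s}f\,d\sigma\,ds$ is available for free if one wants to recover that step. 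One point you should not brush aside, though: your computation yields
\begin{equation*}
\int_{0}^{+\infty}e^{-\sqrt{-b}\,t}\int_{E_t}f(x)\,d\sigma\,dt=\frac{1}{\sqrt{-b}}\int_S e^{-\sqrt{-b}\,r(x)}f(x)\,d\sigma,
\end{equation*}
so the two quantities in the proposition agree only up to the factor $\sqrt{-b}$, not exactly as displayed in the statement. This is not a defect of your argument: the paper's own intermediate identity carries the same factor $\sqrt{-b}$, so the equality as stated is exact only for $b=-1$; and since later in the paper only the equivalence of finiteness is used (for $R(t)$ and for $\int_{E_t}(b-K^N)d\sigma$), the constant is harmless. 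Still, you should state the identity with the explicit factor rather than say it can be absorbed by normalization. Finally, Tonelli needs $(S,d\sigma)$ to be $\sigma$-finite; this is immediate because the precompact sets $E_t$ exhaust $S$, but it deserves one line alongside your measurability remark.
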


\begin{proof}

Given the exhaustion of $S$ by extrinsic balls $\{E_{t}\}_{t>0}$, we apply the co-area formula to obtain, for each $t >0$:

\[
\int_{E_{t}}e^{-\sqrt{-b}r(x)}~f(x)d\sigma=\int_{0}^{t}e^{-\sqrt{-b}s}
\int_{\partial E_{s}}\frac{f(x)}{\Vert \nabla^{S}r\Vert }d\mu ds
\]

\noindent and, on the other hand,
\[
\frac{d}{ds}\int_{E_{s}}f(x)d\sigma=\int_{\Gamma_{s}}\frac{f(x)}
{\Vert \nabla^{S}r\Vert }d\mu~
\]
Hence
\begin{equation}\label{integrals1}
\begin{aligned}
\int_{E_{t}}&e^{-\sqrt{-b}r(x)}~f(x)~d\sigma  =\int_{0}^{t}e^{-\sqrt{-b}
s}\left(  \frac{d}{ds}\int_{E_{s}}f(x)d\sigma\right)  ds\\
&  =e^{-\sqrt{-b}t}\int_{E_{t}}f(x)~d\sigma+\sqrt{-b}\int_{0}^{t}e^{-\sqrt
{-b}s}\int_{E_{s}}f(x)~d\sigma
\end{aligned}
\end{equation}

Taking limits when $t \to \infty$

\begin{equation}\label{integrals}
\begin{aligned}
\int_{S}e^{-\sqrt{-b}r(x)}~f(x)~d\sigma&=\lim_{t\to \infty}\int_{E_t}e^{-\sqrt{-b}r(x)}~f(x)~d\sigma\\=\left(  \underset{t\rightarrow+\infty
}{\lim}e^{-\sqrt{-b}t}\right)  \int_{S}f(x)~d\sigma&+\sqrt{-b}\int_{0}
^{+\infty}e^{-\sqrt{-b}s}\int_{E_{t}}f(x)~d\sigma
\end{aligned}
\end{equation}

\noindent and we have the result because both integrals on the right-hand side of equation (\ref{integrals}) are non-negative.
\end{proof}

\subsection{An application of the Gauss-Bonnet theorem: geodesic curvature of the extrinsic curves on the surface $S$}
 These results have been stated and proven previously in \cite{Ch2} and \cite{Che3}, when the ambient manifold is the hyperbolic space. We extend it here to minimal surfaces in a  Cartan-Hadamard manifold.
 
\begin{proposition}
\label{GeodCurvProp} Let $S^{2}$ be a properly immersed and minimal surface in
a Cartan-Hadamard manifold $N$, with sectional curvatures bounded from above by a negative quantity $K_N \leq b <0$. Let $E_{t}$ be an extrinsic ball in $S$ centered on a pole $o \in N$. The
geodesic curvature of the extrinsic sphere $\partial E_{t}$, denoted as
$k_{g}^{t}$, is bounded from below as follows
\begin{equation}
\begin{aligned}
k_{g}^{t}\geq&  \frac{1}{\Vert \nabla^{S}r\Vert }\left\{  \eta_{\omega_{b}
}+\left\langle A^{S}(e,e),\nabla^{N}r\right\rangle \right\} \\&=\{\eta_{\omega_{b}}(t)-\langle \nabla^{\bot}r,A^{S}(\frac{\nabla^{S}
r}{\Vert\nabla^{S}r\Vert},\frac{\nabla^{S}r}{\Vert\nabla^{S}r\Vert}
)\rangle\}\frac{1}{\Vert\nabla^{S}r\Vert} \label{geodcurvineq}
\end{aligned}
\end{equation}
where $A^S$ denotes the second fundamental form of $S$ in $N$, $e \in TS$ is unitary and tangent to $\Gamma_{t}$ and $\eta
_{\omega_{b}}(t)=h_b(t)$ is the constant mean
curvature of the distance spheres in the hyperbolic spaces $\Han$.
\end{proposition}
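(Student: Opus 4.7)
The plan is to read off the geodesic curvature $k_g^t$ of the level curve $\Gamma_t = \{r|_S = t\}$ directly from $\operatorname{Hess}^S(r)$ in a direction tangent to $\Gamma_t$, apply the Hessian comparison machinery of Proposition \ref{corLapComp} to bound this Hessian from below in terms of the model warping function $w_b$, and finally trade one diagonal component of $A^S$ for the other using the minimality of $S$ to pass from the first form of the inequality to the second one.

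\textbf{Step 1: geodesic curvature as a Hessian.} Along $\Gamma_t$ the function $r|_S$ is constant, so $e(r) = \langle e, \nabla^S r \rangle = 0$ at every point of $\Gamma_t$. Unfolding the definition of the Hessian one gets
\[
\operatorname{Hess}^S(r)(e,e) \;=\; -\langle \nabla^S_e e,\, \nabla^S r\rangle \;=\; k_g^{t}\,\|\nabla^S r\|,
\]
where $k_g^{t}$ is the signed geodesic curvature of $\Gamma_t$ with respect to the inward-pointing unit normal $-\nabla^S r/\|\nabla^S r\|$ (this is the natural choice making $k_g^t$ positive for round extrinsic spheres).

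\textbf{Step 2: Hessian comparison.} Apply Proposition \ref{corLapComp} to $S \subset N$ with $f = \operatorname{id}$ (so $f' \equiv 1 \geq 0$, $f'' \equiv 0$) and the hyperbolic model $\Hatwo$ with warping function $w_b$, noting that $-w_b''/w_b = b$ provides the required upper bound for the radial sectional curvatures of $N$. Because $e \in T_qS$ is perpendicular to $\nabla^S r$ within $S$ and to $\nabla^\perp r$ (which lies in $T^\perp S$), one has $\langle e, \nabla^N r\rangle = 0$, and the inequality collapses to
\[
\operatorname{Hess}^S(r)(e,e) \;\geq\; \eta_{w_b}(t) \;+\; \langle \nabla^N r,\, A^S(e,e)\rangle.
\]
Dividing by $\|\nabla^S r\|$ and combining with Step 1 gives the first form of the stated bound on $k_g^t$.

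\textbf{Step 3: minimality and the second form.} Let $\nu = \nabla^S r/\|\nabla^S r\|$, so that $\{e,\nu\}$ is an orthonormal basis of $T_qS$ at every regular point of $r|_S$. Minimality of $S$ amounts to $A^S(e,e) + A^S(\nu,\nu) = 0$, hence $A^S(e,e) = -A^S(\nu,\nu)$. Since $A^S(e,e)$ is normal to $S$, pairing with $\nabla^N r = \nabla^S r + \nabla^\perp r$ extracts only the normal component, giving
\[
\langle \nabla^N r,\, A^S(e,e)\rangle \;=\; -\langle \nabla^\perp r,\, A^S(\nu,\nu)\rangle,
\]
which converts the first form into the second. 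The only delicate point throughout is bookkeeping of signs, namely verifying that the hypothesis $K_N \leq b$ produces a \emph{lower} bound on $\operatorname{Hess}^N r$ (the $\geq$ branch of Theorem \ref{thmGreW}/Proposition \ref{corLapComp}) and matching the orientation of $k_g^t$ with the inward normal used in Step 1; once these conventions are aligned the argument is a short computation.
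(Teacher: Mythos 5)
Your proposal is correct and follows essentially the same route as the paper: identify $k_g^t\,\Vert\nabla^S r\Vert$ with $\operatorname{Hess}^S r(e,e)$, apply the $f'\geq 0$ branch of Proposition \ref{corLapComp} with $f(r)=r$ (the term $-\eta_{\omega_b}\langle e,\nabla^N r\rangle^2$ vanishing since $e$ is tangent to $\Gamma_t$), and use minimality plus the normal-valuedness of $A^S$ to pass to the second form. You in fact make explicit the identification of the geodesic curvature with the Hessian and the sign conventions, which the paper leaves implicit.
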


\begin{proof}
We apply Proposition \ref{corLapComp} to $f(r)=r$ to conclude that the geodesic curvature $k_{g}^{t}$ satisfies the inequality 
\begin{equation}
\begin{aligned}
k_{g}^{t}  &  =\frac{1}{\Vert \nabla^{S}r\Vert }Hess^{S}
r(e,e)\geq\\
&  \frac{1}{\Vert \nabla^{S}r\Vert }\left\{  -\eta_{\omega_{b}
}\left\langle e,\nabla^{N}r\right\rangle ^{2}+\eta_{\omega_{b}}+\left\langle
A^{S}(e,e),\nabla^{N}r\right\rangle \right\}  =\\
&  \frac{1}{\Vert \nabla^{S}r\Vert }\left\{  \eta_{\omega_{b}
}+\left\langle A^{S}(e,e),\nabla^{N}r\right\rangle \right\}  ,
\end{aligned}
\end{equation}

\noindent where $e \in TS$ is unitary and tangent to $\Gamma_{r}$.

As
\begin{equation}
H^{S}=\frac{1}{2}\left[  A^{S}(e,e)+A^{S}(\frac{\nabla^{S}r}{\Vert
\nabla^{S}r\Vert },\frac{\nabla^{S}r}{\Vert \nabla^{S}r\Vert
})\right]=0  ,
\end{equation}

we obtain:

\begin{equation}
\begin{aligned}
k_{g}^{t}  &  \geq\frac{1}{\Vert \nabla^{S}r\Vert }\left\{
\eta_{\omega_{b}}(t)
-\left\langle A^{S}(\frac{\nabla^{S}r}{\Vert \nabla^{S}r\Vert
},\frac{\nabla^{S}r}{\Vert \nabla^{S}r\Vert }),\nabla^{\perp
}r\right\rangle \right\}  .
\end{aligned}
\end{equation}

\end{proof}

\begin{proposition}
\label{ChaEulerProp} Let $S^{2}$ be a properly immersed and minimal surface in
a Cartan-Hadamard manifold $N$, with sectional curvatures bounded from above by a negative quantity $K_N \leq b <0$. Let $E_{t}$ be a (non-connected) extrinsic ball in $S$ centered on a pole $o \in N$. The
volume $v(t)=\Vol(E_{t})$ satisfies the inequality
\begin{equation}\label{ChaEulerPropIneq}
\begin{split}
2\pi\chi(E_{t})&\geq\eta_{\omega_{b}}(t)v^{\prime}(t)- \int_{\partial E_{t}
}\langle \frac{\nabla^{\bot}r}{\Vert\nabla^{S}r\Vert},A^{S}(\frac{\nabla^{S}r}
{\Vert\nabla^{S}r\Vert},\frac{\nabla^{S}r}{\Vert\nabla^{S}r\Vert}) \rangle d\sigma_t
\\ &+\int_{E_{t}}K_{S}~d\sigma
\end{split}
\end{equation}
where $K_{S}$ denotes the Gaussian curvature of $S$.
\end{proposition}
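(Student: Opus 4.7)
The plan is to apply the Gauss–Bonnet theorem directly to the compact surface with boundary $E_t$ and then feed in the geodesic curvature lower bound already furnished by Proposition \ref{GeodCurvProp} together with the co-area formula (Theorem \ref{coarea}).

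First I would restrict attention to those radii $t$ for which $\partial E_t = \Gamma_t$ is smooth; by Sard's theorem and the Regular Level Set Theorem (Remark \ref{theRemk0}) such $t$ form a dense set in $\mathbb{R}_{+}$, and the general case follows by a limiting argument, since both sides of the inequality depend continuously on $t$ through $v(t)$, $v'(t)$ and the boundary integrals. For such regular $t$, the classical Gauss–Bonnet formula applied to the (possibly non-connected) compact surface $E_t$ with piecewise smooth boundary $\Gamma_t$ yields
\begin{equation*}
2\pi\chi(E_t) \;=\; \int_{E_t} K_S\, d\sigma \;+\; \int_{\partial E_t} k_g^{t}\, ds,
\end{equation*}
where $k_g^{t}$ is the geodesic curvature of $\Gamma_t$ in $S$, with the inward-pointing convention compatible with that used in Proposition \ref{GeodCurvProp}.

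Next I would invoke Proposition \ref{GeodCurvProp}, which asserts
\begin{equation*}
k_g^{t} \;\geq\; \frac{1}{\|\nabla^{S} r\|}\Bigl\{\eta_{\omega_b}(t) - \bigl\langle \nabla^{\bot} r, A^{S}\bigl(\tfrac{\nabla^{S} r}{\|\nabla^{S} r\|}, \tfrac{\nabla^{S} r}{\|\nabla^{S} r\|}\bigr)\bigr\rangle\Bigr\},
\end{equation*}
and integrate along $\Gamma_t$. The first term produces $\eta_{\omega_b}(t)\int_{\partial E_t} \|\nabla^{S} r\|^{-1}\, d\sigma_t$, and by the second part of the co-area formula (Theorem \ref{coarea}) applied to the exhaustion of $S$ by extrinsic balls, this integral equals $v'(t)$. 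Combining:
\begin{equation*}
\int_{\partial E_t} k_g^{t}\, ds \;\geq\; \eta_{\omega_b}(t)\, v'(t) \;-\; \int_{\partial E_t} \Bigl\langle \tfrac{\nabla^{\bot} r}{\|\nabla^{S} r\|}, A^{S}\bigl(\tfrac{\nabla^{S} r}{\|\nabla^{S} r\|}, \tfrac{\nabla^{S} r}{\|\nabla^{S} r\|}\bigr)\Bigr\rangle\, d\sigma_t.
\end{equation*}
Inserting this into the Gauss–Bonnet identity gives exactly inequality (\ref{ChaEulerPropIneq}).

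The main step requiring care is not any of the individual computations but rather the bookkeeping: $E_t$ need not be connected, so one must check that Gauss–Bonnet applies with the sum-of-components Euler characteristic on the left; and one must use the same orientation/sign convention for the inward normal along $\Gamma_t$ that Proposition \ref{GeodCurvProp} uses, so that the lower bound on $k_g^{t}$ translates correctly into a lower bound on the boundary integral. Once these conventions are fixed and the regularity of $\Gamma_t$ is handled via Sard, the argument reduces to the three-line chain Gauss–Bonnet $\Rightarrow$ geodesic curvature bound $\Rightarrow$ co-area identification of $v'(t)$.
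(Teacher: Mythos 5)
Your proposal is correct and follows essentially the same route as the paper: apply Gauss--Bonnet to $E_t$, insert the lower bound on $k_g^t$ from Proposition \ref{GeodCurvProp}, and identify $\int_{\partial E_t}\Vert\nabla^S r\Vert^{-1}d\sigma_t$ with $v'(t)$ via the co-area formula (a step the paper leaves implicit but which is exactly how its final displayed inequality becomes (\ref{ChaEulerPropIneq})). Your added remarks on regular values of $r$ and on non-connected $E_t$ are sensible bookkeeping already covered by Remark \ref{theRemk0} and the additivity of $\chi$, so no new argument is needed.
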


\begin{proof}
Applying the Gauss-Bonnet theorem

\begin{equation}
\int_{\partial E_{t}}k_{g}^{t}d\mu+\int_{E_{t}}K_{S}d\sigma=2\pi\chi(E_{t}),
\end{equation}

Now, using Proposition \ref{GeodCurvProp}
\begin{equation}
\begin{aligned}
&  2\pi\chi(E_{t})\geq\\
&  \int_{\partial E_{t}}\frac{1}{\Vert \nabla^{S}r\Vert }\left\{
\eta_{\omega_{b}}(t)-\left\langle A^{S}
(\frac{\nabla^{S}r}{\Vert \nabla^{S}r\Vert },\frac{\nabla^{S}
r}{\Vert \nabla^{S}r\Vert }),\nabla^{\perp}r\right\rangle \right\}
d\sigma_t\\&+\int_{E_{t}}K_{S}d\sigma.
\end{aligned}
\end{equation}
\end{proof}

\begin{proposition}
\label{PropIneq} Let $S^{2}$ be a properly immersed and minimal surface in
a Cartan-Hadamard manifold $N$, with sectional curvatures bounded from above by a negative quantity $K_N \leq b <0$. Let $E_{t}$ be an extrinsic ball in $S$ centered on a pole $o \in N$. Then,
given the non-negative real numbers $t>s>0$, we have

\begin{equation}\label{PropIneqIneq}
\begin{split}
\frac{\int_{E_{t}}\cosh\sqrt{-b}rd\sigma}{\cosh^{2}\sqrt{-b}t}-\frac
{\int_{E_{s}}\cosh\sqrt{-b}rd\sigma}{\cosh^{2}\sqrt{-b}s}\\ \geq\int_{E_{t}
-E_{s}}\frac{1+\sinh^{2}\sqrt{-b}r\Vert\nabla^{\bot}r\Vert^{2}}{\cosh^{3}
\sqrt{-b}r}d\sigma
\end{split}
\end{equation}

\end{proposition}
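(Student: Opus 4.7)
Set $F(t):=\int_{E_t}\cosh\sqrt{-b}r\,d\sigma/\cosh^{2}\sqrt{-b}t$, so that (\ref{PropIneqIneq}) reads $F(t)-F(s)\geq\int_{E_t\setminus E_s}(1+\sinh^{2}\sqrt{-b}r\,\Vert\nabla^{\bot}r\Vert^{2})/\cosh^{3}\sqrt{-b}r\,d\sigma$. The plan is to establish, at each regular value $t$ of $r|_S$, the pointwise derivative bound
\begin{equation*}
F'(t)\;\geq\;\int_{\partial E_t}\frac{1+\sinh^{2}\sqrt{-b}r\,\Vert\nabla^{\bot}r\Vert^{2}}{\cosh^{3}\sqrt{-b}r\,\Vert\nabla^{S}r\Vert}\,d\mu,
\end{equation*}
then integrate from $s$ to $t$ and recognise the right-hand side of (\ref{PropIneqIneq}) via the co-area formula (Theorem \ref{coarea}). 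Singular values of $r|_S$ form a null set by Sard's theorem (Remark \ref{theRemk0}), so this suffices.

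The analytic input is Proposition \ref{corLapComp} applied to $f(r)=\cosh\sqrt{-b}r$ with model $w=w_b$. Because $w_b''=-b\,w_b$, a direct computation gives $f''(r)-f'(r)\eta_{w_b}(r)\equiv 0$; since $f'\geq 0$ and $S$ is minimal ($H^S=0$), the proposition yields
\begin{equation*}
\Delta^{S}\cosh\sqrt{-b}r\;\geq\;2f'(r)\,\eta_{w_b}(r)\;=\;-2b\cosh\sqrt{-b}r.
\end{equation*}
Integrating over $E_t$ and applying the divergence theorem, with $\nabla^{S}\cosh\sqrt{-b}r=\sqrt{-b}\sinh\sqrt{-b}r\,\nabla^{S}r$ and outward unit normal $\nabla^{S}r/\Vert\nabla^{S}r\Vert$ on $\partial E_t$, produces the key estimate
\begin{equation*}
\int_{\partial E_t}\sinh\sqrt{-b}r\,\Vert\nabla^{S}r\Vert\,d\mu\;\geq\;2\sqrt{-b}\int_{E_t}\cosh\sqrt{-b}r\,d\sigma. \tag{$\ast$}
\end{equation*}

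Theorem \ref{coarea}, together with $r=t$ on $\partial E_t$, gives $\tfrac{d}{dt}\int_{E_t}\cosh\sqrt{-b}r\,d\sigma=\cosh\sqrt{-b}t\cdot v'(t)$, whence
\begin{equation*}
F'(t)=\frac{v'(t)}{\cosh\sqrt{-b}t}-\frac{2\sqrt{-b}\sinh\sqrt{-b}t}{\cosh^{3}\sqrt{-b}t}\int_{E_t}\cosh\sqrt{-b}r\,d\sigma.
\end{equation*}
Substituting $(\ast)$ as an upper bound on the second term, writing $v'(t)=\int_{\partial E_t}\Vert\nabla^{S}r\Vert^{-1}d\mu$, collecting the two contributions on $\partial E_t$ over the common denominator $\cosh^{3}\sqrt{-b}t\,\Vert\nabla^{S}r\Vert$ and applying $\Vert\nabla^{S}r\Vert^{2}=1-\Vert\nabla^{\bot}r\Vert^{2}$ together with $\cosh^{2}-\sinh^{2}=1$, routine algebra collapses the numerator to $1+\sinh^{2}\sqrt{-b}r\,\Vert\nabla^{\bot}r\Vert^{2}$; this is the claimed bound on $F'(t)$. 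Integrating on $[s,t]$ and invoking co-area once more gives (\ref{PropIneqIneq}). The one non-routine step is spotting the cancellation $f''-f'\eta_{w_b}\equiv 0$, which is precisely what dictates the normalising factor $\cosh^{2}\sqrt{-b}t$ in the definition of $F$; beyond that I do not foresee any genuine obstacle.
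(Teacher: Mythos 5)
Your proposal is correct and follows essentially the same route as the paper: apply Proposition \ref{corLapComp} to $f(r)=\cosh\sqrt{-b}r$ (using $f''-f'\eta_{w_b}\equiv 0$ and minimality) to get $\Delta^{S}\cosh\sqrt{-b}r\geq -2b\cosh\sqrt{-b}r$, integrate and use the divergence theorem to obtain the boundary estimate, then differentiate the quotient $\int_{E_t}\cosh\sqrt{-b}r\,d\sigma/\cosh^{2}\sqrt{-b}t$, simplify the numerator via $\Vert\nabla^{S}r\Vert^{2}=1-\Vert\nabla^{\bot}r\Vert^{2}$, and integrate from $s$ to $t$ with the co-area formula. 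No substantive differences from the paper's argument.
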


\begin{proof}
As $K_{N}\leq b$ by applying (\ref{LapFunc1}) to the radial function $f(r)=\cosh\sqrt{-b}r$, and as $S$ is minimal, we have,
\begin{equation}\label{compa_cosh}
\Delta^{S}\cosh\sqrt{-b}r\geq-2b\cosh\sqrt{-b}r 
\end{equation}
We integrate inequality (\ref{compa_cosh}) within $E_u$ and then we apply the divergence theorem to obtain
\begin{equation}\label{designablar}
  \sqrt{-b}\sinh\sqrt{-b}u\int_{\Gamma_{u}}\Vert \nabla
^{S}r\Vert d\sigma_u\geq
  -2b\int_{E_{u}}\cosh\sqrt{-b}r~d\sigma
\end{equation}
\newline Therefore
\begin{equation}\label{desig2}
  \int_{E_{u}} \cosh\sqrt{-b}r  ~d\sigma\leq
 \frac{1}{2}\frac{\sinh\sqrt{-b}u}{\sqrt{-b}}\int_{\Gamma_{u}}\Vert
\nabla^{S}r\Vert d\sigma_u
\end{equation}
\newline Deriving and using the inequality above
\begin{equation*}
\begin{aligned}
&  \frac{d}{du}\left(  \frac{\int_{E_{u}} \cosh\sqrt{-b}r d\sigma}{\cosh
^{2}\sqrt{-b}u}\right)  \geq\\
&  \frac{1}{\cosh^{3}\sqrt{-b}u}\left\{  \int_{\Gamma_{u}}\frac{\cosh
^{2}\sqrt{-b}r -\sinh^{2}\sqrt{-b}r\Vert \nabla^{S}r\Vert ^{2}
}{\Vert \nabla^{S}r\Vert }d\sigma_u\right\}  =\\
&  \int_{\Gamma_{u}}\frac{1}{\Vert \nabla^{S}r\Vert }\left\{
\frac{1+\sinh^{2}\sqrt{-b}r\Vert \nabla^{\perp}r\Vert ^{2}
}{\cosh^{3}\sqrt{-b}u}d\sigma_u\right\}
\end{aligned}
\end{equation*}

Now, integrate the inequality above between $s$ and $t$ and apply the co-area formula.
\end{proof}

\section{Extrinsic isoperimetry, volume growth, and topology of surfaces}\label{subsecIsopTop} 

 As mentioned in the Introduction, two key ingredients for our proof of the Chern-Osserman inequality are the following results: an isoperimetric inequality established in \cite{Pa} for the extrinsic balls of minimal submanifolds in Cartan-Hadamard manifolds (and also a monotonicity result which is derived from it and from the co-area formula (see \cite{MP} and \cite{A2})), and a result which relates the Euler characteristic of a surface with the limit value of the Euler characteristic of the sets of an exhaustion by connected extrinsic balls of such a surface.

The first of these results is stated as follows:

\begin{theoremA}\label{isopT}(see \cite{A2}, \cite{MP}, \cite{Pa})
Let $P^m$ be a minimal submanifold properly immersed in a Cartan-Hadamard manifold $N^n$ with sectional curvature $K_N \leq b\leq 0$.
Let $E_r$ be an extrinsic $r$-ball in $P^m$,
 with center at a point $o$ which is also a pole in the ambient space $N$. Then
\begin{equation} \label{isopComp}
\frac{\Vol(\partial E_r)}{\Vol(E_r)} \geq
\frac{\Vol(S^{b,m-1}_r)}{\Vol(B^{b,m}_r)} \,\,\,\,\,\textrm{for all}\,\,\,
r>0  \quad .
\end{equation}
\noindent and 
\begin{equation}\label{isopCompdos}
\frac{\Vol(\partial E_r)}{\Vol(E_r)} \geq
(m-1)h_b(r)\,\,\,\,\,\textrm{for all}\,\,\,
r>0
\end{equation}
Furthermore, the function $f(r)= 
\frac{\Vol(E_r)}{\Vol(B^{b,m}_r)}$ is monotone
non-decreasing in $r$.
    
Moreover, if the equality in inequality (\ref{isopComp}) holds
    for some fixed radius $r_0$ 
then $E_{r_0}$ is a
    minimal cone in the ambient space $N^n$, so if $N^n$ is the
    hyperbolic space $\kan$, $b < 0$, then $P^m$ is totally geodesic in
    $\kan$.
\end{theoremA}
\begin{remark}
In \cite{MaPa} there is a comparison among the lower bounds for the isoperimetric quotient in (\ref{isopComp}) and (\ref{isopCompdos}), depending on the sectional curvature $ b \in \erre$. \end{remark}

A particularization for cmi surfaces in a negatively curved Cartan-Hadamard manifold gives the following monotonicity result:

\begin{corollary}[Minimal Monotonicity]\label{minmon}

Let $S$ be a properly immersed and minimal surface in
a Cartan-Hadamard manifold $N$, with sectional curvatures bounded from above by a negative quantity $K_N \leq b <0$.

Then, the functions
\quad $\frac{v(t)}{\cosh(\sqrt{-b}t)-1}$ \quad and \quad $\frac{v(t)}{e^(\sqrt{-b}t)}$\quad are non-decreasing in $[0, +\infty)$, where $v(t)=\Vol(E_t).\\$
\end{corollary}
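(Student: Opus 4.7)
The plan is to derive the monotonicity from Theorem~\ref{isopT} (applied with $m=2$) combined with the co-area formula, exactly mimicking the standard monotonicity argument for model comparison ratios.

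First I would compute the isoperimetric comparison ratio on the right-hand side of \eqref{isopComp} explicitly for the two-dimensional model $\katwo$ with $b<0$. Using the warping function $w_b(s)=\sinh(\sqrt{-b}s)/\sqrt{-b}$, a direct integration gives
\begin{equation*}
\Vol(B^{b,2}_t)=\frac{2\pi}{-b}\bigl(\cosh(\sqrt{-b}t)-1\bigr),\qquad \Vol(S^{b,1}_t)=\frac{2\pi}{\sqrt{-b}}\sinh(\sqrt{-b}t).
\end{equation*}
Setting $\phi(t):=\cosh(\sqrt{-b}t)-1$, so that $\phi'(t)=\sqrt{-b}\sinh(\sqrt{-b}t)$, these two formulas yield the clean identity
\begin{equation*}
\frac{\Vol(S^{b,1}_t)}{\Vol(B^{b,2}_t)}=\frac{\phi'(t)}{\phi(t)}.
\end{equation*}

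Next I would invoke the isoperimetric inequality \eqref{isopComp} on the extrinsic balls $E_t$ of the minimal surface $S^2$, which is legitimate since $S^2$ is properly immersed and minimal in the Cartan-Hadamard manifold $N$ with $K_N\le b<0$ and $o$ is a pole of $N$. This gives
\begin{equation*}
\frac{\Vol(\partial E_t)}{v(t)}\;\geq\;\frac{\phi'(t)}{\phi(t)}\qquad\text{for every }t>0.
\end{equation*}
The co-area formula (Theorem \ref{coarea}) applied to $f=r$ on $S$ yields $v'(t)=\int_{\partial E_t}\Vert\nabla^S r\Vert^{-1}\,d\mu$, and the basic relation \eqref{radiality} gives $\Vert\nabla^S r\Vert\leq\Vert\nabla^N r\Vert=1$, hence $v'(t)\geq\Vol(\partial E_t)$. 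Combining these two inequalities produces the differential inequality
\begin{equation*}
\frac{v'(t)}{v(t)}\;\geq\;\frac{\phi'(t)}{\phi(t)}\qquad\text{for a.e. }t>0.
\end{equation*}

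Finally I would integrate this on any subinterval $[t_1,t_2]\subset(0,+\infty)$, obtaining $\log v(t_2)-\log v(t_1)\geq \log\phi(t_2)-\log\phi(t_1)$ and therefore
\begin{equation*}
\frac{v(t_2)}{\cosh(\sqrt{-b}t_2)-1}\;\geq\;\frac{v(t_1)}{\cosh(\sqrt{-b}t_1)-1},
\end{equation*}
which is the required monotonicity on $(0,+\infty)$; extension to $t=0$ is handled by continuity (if $o\notin S$ the quotient vanishes near $0$, and if $o\in S$ a local Euclidean expansion gives $v(t)/\phi(t)\to -2\pi/b$). There is no serious obstacle here, since the argument is a routine ODE consequence of Theorem~\ref{isopT}; the only point requiring care is the replacement of $\Vol(\partial E_t)$ by $v'(t)$ via co-area, which relies precisely on $\Vert\nabla^S r\Vert\leq 1$, a consequence of the orthogonal decomposition \eqref{radiality} rather than of any extra curvature hypothesis.
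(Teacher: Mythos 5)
Your argument is correct and is essentially the paper's route: the paper simply reads the corollary off the monotonicity clause of Theorem~\ref{isopT} with $m=2$, since $\Vol(B^{b,2}_t)=\frac{2\pi}{-b}\bigl(\cosh(\sqrt{-b}t)-1\bigr)$ is a constant multiple of $\cosh(\sqrt{-b}t)-1$. What you do in addition is rederive that monotonicity clause from the isoperimetric inequality \eqref{isopComp} together with the co-area formula and $\Vert\nabla^S r\Vert\le 1$, which is precisely the derivation the paper attributes to its references, so nothing is missing.
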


On the other hand, we also have the following theorem: as we have mentioned in the Introduction, this is a key result which will allow us to argue as in \cite{Ch2} and \cite{Che3}, applying classical Morse theory to conclude that $\chi(S)=\lim_{t \to \infty} \chi(E_t)$ for an exhaustion of $S$ by extrinsic balls $\{E_t\}_{t>0}$.

Recall that an exhaustion of the surface $S$ by extrinsic balls is a sequence of such subsets, centered at the same point $\{E_t\subseteq S\}_{t>0}$, such that:
\begin{itemize}
\item $E_t \subseteq E_s$ when $s\geq t$
\item $\cup_{t>0} E_t=S$
\end{itemize}

Recall too that the Euler characteristic of a (pre) compact set is finite.

\begin{theorem}\label{CorDifeo}
Let $S$ be an complete minimal surface properly immersed in a Cartan-Hadamard manifold $N$ with sectional curvature bounded from above by a negative quantity $K_N \leq b <0$. Let us suppose that $\int_S\Vert A^S \Vert^2 d\sigma<\infty$ and that $\Vert A^S\Vert(q) \leq h_b(r(q))$ outside a compact set $K \subset S$, where $r(q)=dist_N(o,q)$, the distance to a fixed pole $o \in N$. Then 

(i) $S$ is diffeomorphic to a compact surface $S^*$ punctured at a finite number of points. 
\medskip

(ii) For all sufficiently large $t>R_0>0$, $\chi(S)=\chi(E_t)$ and, hence, given $\{E_t\}_{t>0}$ an exhaustion of $S$ by extrinsic balls centered at the pole $o \in N$, 
$$ -\chi(S)= \lim_{t \to \infty}\inf(- \chi(E_t))<\infty$$

\end{theorem}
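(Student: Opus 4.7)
My plan is to control the critical set of the extrinsic distance function $r|_S\colon S\to\mathbb{R}$ and then apply classical Morse theory; both (i) and (ii) follow once one shows $r|_S$ has no critical points outside a sufficiently large compact subset of $S$. At a critical point $p$ of $r|_S$ the gradient $\nabla^N r(p)$ is normal to $T_pS$, so for every unit $e\in T_pS$ the Gauss formula gives
\[
\Hess^S r(p)(e,e)\;=\;\Hess^N r(p)(e,e)+\langle A^S(e,e),\nabla^N r(p)\rangle .
\]
By the Hessian comparison of Theorem \ref{thmGreW} applied with $K_N\le b$, we have $\Hess^N r(p)(e,e)\ge h_b(r(p))\bigl(1-\langle e,\nabla^N r(p)\rangle^2\bigr)=h_b(r(p))$, and therefore
\[
\Hess^S r(p)(e,e)\;\ge\;h_b(r(p))-\|A^S\|(p),
\]
which outside $K$ is strictly positive under the strict hypothesis $\|A^S\|<h_b(r)$ of Theorem \ref{thmMain1}. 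Every critical point of $r|_S$ outside $K$ is therefore a non-degenerate Morse local minimum, in particular isolated.

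The next, and principal, step is to rule out such minima with arbitrarily large $r$-value. If $p_i$ is such a minimum and $c_i:=r(p_i)$, then for $t$ slightly above $c_i$ a new disk-like connected component of $E_t$ is born near $p_i$; since the previous step forbids saddle-type critical points outside $K$, these components can never merge with the rest of $E_t$, so each of them contributes $+1$ to $\chi(E_t)$. Were there infinitely many $p_i$ with $c_i\to\infty$, this would force $\chi(E_t)\to+\infty$ as $t\to\infty$. To derive the required contradiction I intend to establish a uniform upper bound for $\chi(E_t)$ by combining the Gauss--Bonnet identity
\[
2\pi\chi(E_t)=\int_{E_t}K_S\,d\sigma+\int_{\partial E_t}k_g\,d\mu
\]
with the Gauss equation $K_S=K^N|_S-\frac{1}{2}\|A^S\|^2\le b<0$, the integrability hypotheses $\int_S\|A^S\|^2\,d\sigma<\infty$ and $\int_S(b-K^N|_S)\,d\sigma<\infty$ (which together bound $\int_S(K_S-b)$), and an estimate of $\int_{\partial E_t}k_g\,d\mu$ drawn from Proposition \ref{corLapComp} together with the pointwise bound $\|A^S\|\le h_b(r)$. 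Establishing this $t$-uniform upper bound is where the main technical work lies.

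Once the critical set of $r|_S$ is confined to a bounded interval $[0,R_0]$, the gradient $\nabla^S r$ is nowhere zero on $r^{-1}((R_0,\infty))\cap S$, and the flow of $\nabla^S r/\|\nabla^S r\|^2$ trivialises this region as $\partial E_{R_0}\times[R_0,\infty)$. Hence $E_t$ deformation retracts onto $E_{R_0}$ for every $t>R_0$ and $\chi(E_t)=\chi(E_{R_0})$. Compactness of $\overline{E_{R_0}}$ (from the proper immersion of $S$) makes $\partial E_{R_0}$ a finite disjoint union of $k$ circles, and capping each resulting end $S^1\times[R_0,\infty)$ with a disk produces a closed surface $S^*$ with $S\cong S^*\setminus\{k\text{ points}\}$, giving (i). Since half-cylinders have vanishing Euler characteristic, $\chi(S)=\chi(E_{R_0})=\chi(E_t)<\infty$ for every $t>R_0$, which is (ii).
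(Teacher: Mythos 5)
Your first and last steps are fine and match the paper: at a critical point of $r|_S$ the vector $\nabla^N r$ is normal to $T_pS$, so Theorem \ref{thmGreW} plus the Gauss formula gives $\Hess^S r(e,e)\ge h_b(r)-\Vert A^S\Vert(p)>0$ outside $K$ (you are right that the strict bound $\Vert A^S\Vert<h_b(r)$ of Theorem \ref{thmMain1} is what is really used), and your closing paragraph (gradient-flow trivialisation of $S\setminus E_{R_0}$, $\chi(E_t)=\chi(E_{R_0})$, capping the finitely many cylindrical ends) is exactly the paper's conclusion via the diffeomorphism $S\setminus E_{R_0}\cong\Gamma_{R_0}\times[0,\infty)$ and Theorem 3.1 of \cite{Mi}. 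The genuine gap is your middle step: the heart of the theorem is that $r|_S$ has no critical points (or at least only finitely many) outside a compact set, and you reduce this to a ``$t$-uniform upper bound for $\chi(E_t)$'' which you never establish and explicitly defer as ``the main technical work''. Worse, the route you sketch for it is unlikely to go through with the tools at hand: Propositions \ref{corLapComp} and \ref{GeodCurvProp} give \emph{lower} bounds for $\Hess^S r$ and hence for the geodesic curvature $k_g$ of $\partial E_t$, whereas an \emph{upper} bound for $\int_{\partial E_t}k_g\,d\mu$ (what Gauss--Bonnet needs to bound $\chi(E_t)$ from above) would require an upper bound for $\Hess^N r$, i.e. a lower bound on $K_N$, which is not assumed; one would also have to control the factor $1/\Vert\nabla^S r\Vert$ near points where the gradient degenerates. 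The paper avoids all of this: it deduces from inequality (\ref{geodcurvineq}) that $\Vert\nabla^S r\Vert\,k_g^{\Gamma_t}\ge h_b(r)-\Vert A^S\Vert>0$ on the level curves outside $K$, concludes that $r_o$ has no critical points in $S\setminus E_{R_0}$, and then applies Milnor's theorem directly.

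In fact you are one observation away from closing the gap without any integral estimate, using only what you have already proved. Since every critical point outside $K$ is a nondegenerate local minimum, such points are isolated, and by Morse theory passing a minimum only creates a new disk component of the sublevel sets, while passing a regular interval induces an injection on connected components; so, as you note yourself, a component born at a minimum with value beyond $R_0:=\max_K r$ can never merge with the component containing $E_{R_0}$. But $S$ is connected and exhausted by the $E_t$, so any two points of $S$ lie in a common component of $E_t$ for all sufficiently large $t$; hence every component must eventually merge, and merging requires a critical point that is not a local minimum. Therefore there is no critical point of $r|_S$ with value beyond $R_0$ at all, which is precisely the statement the paper extracts (rather tersely) from the geodesic-curvature lower bound before invoking \cite{Mi}. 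With that replacement of your Gauss--Bonnet step, your proposal becomes a complete proof along essentially the same lines as the paper's.
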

\begin{proof} 
 Let us consider $\{E_t\}_{t>0}$ an exhaustion of $S$ by extrinsic balls, centered at the pole $o \in N$. 
We apply Proposition \ref{GeodCurvProp} to the smooth curves $\partial E_t=\Gamma_t$. As 
$$-\Vert A^S\Vert \leq \langle A^S(e,e),\gr^{\bot}r\rangle \leq \Vert A^S\Vert $$

\noindent we have, on the points of the curve $ q \in \Gamma_t$,
\begin{equation}\label{grg0}
\begin{aligned}
\Vert \gr^Sr\Vert(q) \cdot k_g^{\Gamma_t}(q)&\geq h_b(r_p(q))+\langle A^S(e,e),\gr^{\bot}r\rangle(q)\,\\&\geq h_b(r_p(q))-\Vert A^S \Vert(q)
\end{aligned}
\end{equation}
As $\Vert A^S\Vert(q) \leq h_b(r(q)) \,\,\,\forall q \in S \setminus K$, we have, for all the points $q \in \Gamma_t$ and for sufficiently large $t$,

\begin{equation}\label{grg1}
\Vert \gr^Sr\Vert(q) \cdot k_g^{\Gamma_t}(q)\ >0
\end{equation}
Hence, $\Vert \gr^Sr\Vert >0$ in $\Gamma_t$, for all sufficiently large $t$. By fixing a sufficiently large radius $R_0$, we can conclude that the extrinsic distance $r_o$ has no critical points in $S\setminus E_{R_0}$.

The above inequality implies that for this sufficiently large fixed radius $R_0$, there is a diffeomorphism:

$$
\Phi: S\setminus E_{R_0} \to \Gamma_{R_0} \times [0,\infty[
$$

In particular, $S$ has only finitely many ends, each of a finite topological type.

To prove this we apply Theorem 3.1 in \cite{Mi}, concluding that, as the extrinsic annuli $A_{R_0,R}(o)= E_R(o) \setminus E_{R_0}(o)$ contains no critical points of the extrinsic distance function $r_o: S\longrightarrow \erre$ because of inequality (\ref{grg0}), then $E_R(o)$ is diffeomorphic to $E_{R_0}(o)$ for all $R \geq R_0$.

The above diffeomorphism implies that we can construct $S$ from $E_{R_0}$ by attaching annuli and that $\chi(S\setminus E_{t})=0$ when $t\geq R_0$.
Then, for all $t>R_0$,

$$
\chi(S)=\chi(E_{t}\cup (S\setminus E_{t}))=\chi(E_{t}) 
$$

\end{proof}

\section{Proof of Theorem \ref{thmMain1} }\label{subsecMain} 

In this Section we are going to prove our main result, (Theorem \ref{thmMain1}), which generalizes the main theorem in \cite{Che3}. 

Let us consider $\{E_{t}\}_{t>0}$ an exhaustion of $S$ by extrinsic
balls centered at the pole $o \in N$. By adding the quantity $bv(t)$ on both sides of inequality (\ref{ChaEulerPropIneq}), using the Gauss formula to  replace $K_{S}$ by $K_{N}-\frac{1}{2}\Vert A^{S}\Vert ^{2}$ in this same inequality and defining $R(t):= \int_{E_t}\Vert A^S\Vert d\sigma$, we have

\begin{equation}\label{des3}
\begin{aligned}
  \eta_{\omega_{b}}(t)v^{\prime}(t)+b~v(t)\leq
  -\int_{E_{t}}(K_{N}-\frac{1}{2}\Vert A^{S}\Vert^{2}
)d\sigma+\\
  \int_{\partial E_{t}}\frac{1}{\Vert \nabla^{S}r\Vert
}\langle A(\frac{\nabla^{S}r}{\Vert \nabla^{S}r\Vert },\frac{\nabla^{S}
r}{\Vert \nabla^{S}r\Vert }),\nabla^{\bot}r\rangle d\sigma_t+2\pi\chi
(E_{t})\\+\int_{E_{t}}b~d\sigma=
  -\int_{E_{t}}(K_{N}-b)d\sigma&+\frac{1}{2}R(t)\\+\int_{\partial E_{t}
}\frac{1}{\Vert \nabla^{S}r\Vert }\langle A(\frac{\nabla^{S}r}{\Vert
\nabla^{S}r\Vert },\frac{\nabla^{S}r}{\Vert \nabla^{S}r\Vert
}),\nabla^{\bot}r\rangle d\sigma_t&+2\pi\chi(E_{t}). \end{aligned}
\end{equation}

From now on, we denote

\begin{equation}\label{eqdef}
I(t)=\int_{\partial E_{t}}\frac{1}{\Vert\nabla^{S}r\Vert}\left\langle
A^{S}\left(  \frac{\nabla^{S}r}{\Vert\nabla^{S}r\Vert},\frac{\nabla^{S}
r}{\Vert\nabla^{S}r\Vert}\right)  ,\nabla^{\perp}r\right\rangle d\sigma_t,
\end{equation}
 It is straightforward to check that

\begin{equation}\label{des4}
\eta_{\omega_{b}}(t)v^{\prime}(t)+b~v(t)=\sqrt{-b}\frac{\cosh^{2}(\sqrt{-b}
t)}{\sinh(\sqrt{-b}t)}\text{~}\frac{d}{dt}\frac{v(t)}{\cosh(\sqrt{-b}t)}.
\end{equation}
Then, inequality (\ref{des3}) becomes
\begin{equation}
\begin{aligned}
\frac{d}{dt}&\frac{v(t)}{\cosh(\sqrt{-b}t)}    \leq\frac{1}{\sqrt{-b}}
\frac{\sinh(\sqrt{-b}t)}{\cosh^{2}(\sqrt{-b}t)}\left\{  -\int_{E_{t}}
(K_{N}-b)d\sigma+\frac{1}{2}R(t)+\right. \\
&  \left. I(t)+2\pi\chi(E_{t})\right\}  
\end{aligned}
\end{equation}

On the other hand, for all $t >0$ we have:
\begin{equation}\label{sin_exp}
\frac{\sinh(\sqrt{-b}t)}{\cosh^{2}(\sqrt{-b}t)}\leq2e^{-\sqrt{-b}t}
\end{equation}
and hence

\begin{equation}\label{exp}
\begin{aligned}
  \frac{d}{dt}\frac{v(t)}{\cosh(\sqrt{-b}t)}&\leq
  \frac{1}{\sqrt{-b}}\{  2e^{-\sqrt{-b}t}\int_{E_{t}}(-K_{N}
+b)d\sigma+e^{-\sqrt{-b}t}R(t)\\&+
  \frac{\sinh(\sqrt{-b}t)}{\cosh^{2}
(\sqrt{-b}t)}I(t)+4e^{-\sqrt{-b}
t}\pi\chi(E_{t})\}  .
\end{aligned}
\end{equation}

By Theorem \ref{CorDifeo}, for all sufficiently large $t >R_0$, $\chi(E_t)=\chi(S)$.
Now, we integrate both sides of inequality (\ref{exp}) between $0$ and a fixed $t >R_0$, and taking into account that $\frac{v(0)}{\cosh(0)}=0$, the definition of $I(t)$, applying the co-area formula and using the fact that, by Theorem \ref{CorDifeo}, $\chi(E_s) \leq \vert \chi(E_s)\vert  = \vert \chi(S)\vert  <\infty \,\,\,\forall s>R_0$:
\begin{equation}\label{cuentagorda}
\begin{aligned}
  &\frac{v(t)}{\cosh(\sqrt{-b}t)}\leq\frac{1}{\sqrt{-b}}\left\{  2\int_{0}
^{t}e^{-\sqrt{-b}s}\int_{E_{s}}(b-K_{N})d\sigma ds\right.\\&+
  \int_{0}^{t}e^{-\sqrt{-b}s}R(s)ds+\int_{0}^t\frac{\sinh(\sqrt{-b}
s)}{\cosh^{2}(\sqrt{-b}s)}I(s)ds \\&+
   4\pi \int_0^{t}\chi(E_{s})e^{-\sqrt{-b}
s}ds\left. \right\} \\ & \leq \frac{1}{\sqrt{-b}}\left\{  2\int_{0}
^{t}e^{-\sqrt{-b}s}\int_{E_{s}}(b-K_{N})d\sigma ds+\right. 
  \int_{0}^{t}e^{-\sqrt{-b}s}R(s)ds\\&+ \int_{0}^t\frac{\sinh(\sqrt{-b}
s)}{\cosh^{2}(\sqrt{-b}s)}I(s)ds+
  \left.  C(0) 
 \right\} 
\end{aligned}
\end{equation}
where 
\begin{equation*}
\begin{aligned}
0 &< C(0)= 4\pi\int_{0}^{R_0}\chi(E_s)e^{-\sqrt{-b}
s} ds+4\pi \vert \chi(S)\vert \int_{R_0}^{\infty}e^{-\sqrt{-b}
s} ds\\&= 4\pi\int_{0}^{R_0}\chi(E_s)e^{-\sqrt{-b}
s} ds+\frac{4 \pi \vert \chi(S)\vert }{\sqrt{-b}}e^{-\sqrt{-b}R_0}<\infty
\end{aligned}
\end{equation*}

We are going to estimate $\Sup_{t>0} \frac{v(t)}{\cosh(\sqrt{-b}t)}$ using the above inequality. To do so, we proceed as follows.

As $\int_{S}\Vert A^{S}\Vert ^{2}d\sigma<+\infty$, then
$\int_{S}e^{-\sqrt{-b}r}\Vert A^{S}\Vert ^{2}d\sigma<+\infty$.

Then, applying Proposition \ref{lema 3.1Chen} to the non-negative function $f=\Vert A^S\Vert^2$,
using hypothesis (\ref{Hypo1}), we have:
\begin{equation}
\int_{0}^{+\infty}e^{-\sqrt{-b}t}R(t)~dt<+\infty\label{acot1}
\end{equation}

By also applying Proposition \ref{lema 3.1Chen} to the non-negative function $f(x)=b-K_{N}(x)$ defined on $S$, and using hypothesis (\ref{Hyp2}) we know that:

\begin{equation}
\int_{0}^{+\infty}e^{-\sqrt{-b}t}\int_{E_{t}}(b-K_{N})d\sigma dt<+\infty
\label{acot2}
\end{equation}

With these estimates we can conclude, by applying the co-area formula and definition (\ref{eqdef}), that:

\begin{equation}\label{acot3}
\begin{aligned}
&\frac{v(t)}{\cosh(\sqrt{-b}t)}\leq C_1(0)+\frac{1}{\sqrt{-b}}\int_{0}^t
\frac{\sinh(\sqrt{-b}s)}{\cosh^{2}(\sqrt{-b}s)}I(s) ds\\& =C_1(0)+ \frac{1}{\sqrt{-b}}\int_{E_{t}}
\frac{\sinh(\sqrt{-b}r)}{\cosh^{2}(\sqrt{-b}r)}\langle A^{S}(\frac{\nabla^{S}
r}{\Vert \nabla^{S}r\Vert },\frac{\nabla^{S}r}{\Vert
\nabla^{S}r\Vert }),\nabla^{\bot}r\rangle d\sigma.\\
\end{aligned}
\end{equation}

\noindent where $C_1(0)=\frac{1}{\sqrt{-b}}\{C(0)+\int_{0}^{+\infty}e^{-\sqrt{-b}t}\int_{E_{t}}(b-K_{N})d\sigma dt+\int_{0}^{+\infty}e^{-\sqrt{-b}t}R(t)~dt\}$ is a positive and  finite constant.\\

To obtain the result, we need the following:

\begin{lemma}
\label{PropIneqCoro}

There is a constant $C_{2}\geq 0$ satisfying
\begin{equation}\label{acot4}
\begin{aligned}
&  \int_{E_{t}}\frac{\sinh(\sqrt{-b}r)}{\cosh^{2}(\sqrt{-b}
r)}\left\langle A^{S}(\frac{\nabla^{S}r}{\Vert \nabla^{S}r\Vert
},\frac{\nabla^{S}r}{\Vert \nabla^{S}r\Vert }),\nabla^{\bot
}r\right\rangle d\sigma\leq\\
&  C_{2}\sqrt{\frac{v(t)}{\cosh(\sqrt{-b}t)}}
\end{aligned}
\end{equation}
\end{lemma}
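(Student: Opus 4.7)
The plan is to apply the Cauchy--Schwarz inequality in $L^{2}(E_{t})$ after using the pointwise bound
$$\left|\langle A^{S}\bigl(\tfrac{\nabla^{S}r}{\|\nabla^{S}r\|},\tfrac{\nabla^{S}r}{\|\nabla^{S}r\|}\bigr),\nabla^{\bot}r\rangle\right|\leq\|A^{S}\|\,\|\nabla^{\bot}r\|,$$
which separates the left-hand side of (\ref{acot4}) into the product
$$\left(\int_{E_{t}}\|A^{S}\|^{2}\,d\sigma\right)^{1/2}\left(\int_{E_{t}}\frac{\sinh^{2}\sqrt{-b}r}{\cosh^{4}\sqrt{-b}r}\,\|\nabla^{\bot}r\|^{2}\,d\sigma\right)^{1/2}.$$
The first factor is dominated uniformly in $t$ by $\bigl(\int_{S}\|A^{S}\|^{2}d\sigma\bigr)^{1/2}<\infty$ thanks to hypothesis (\ref{Hyp3}), and it will therefore absorb into the constant $C_{2}$.

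The whole work is thus reduced to bounding the second factor by $v(t)/\cosh\sqrt{-b}t$, and here the key input is Proposition \ref{PropIneq}. Letting $s\to 0^{+}$ in its inequality (the $E_{s}$-contribution on the left vanishes because either $E_{s}=\emptyset$ for small $s$ when $o\notin S$, or $\int_{E_{s}}\cosh\sqrt{-b}r\,d\sigma=O(s^{2})$ when $o\in S$), and discarding the nonnegative ``$1$'' in the numerator on the right, I obtain
$$\int_{E_{t}}\frac{\sinh^{2}\sqrt{-b}r\,\|\nabla^{\bot}r\|^{2}}{\cosh^{3}\sqrt{-b}r}\,d\sigma\leq\frac{\int_{E_{t}}\cosh\sqrt{-b}r\,d\sigma}{\cosh^{2}\sqrt{-b}t}.$$
Since $\cosh\sqrt{-b}r\geq 1$ implies $\sinh^{2}/\cosh^{4}\leq\sinh^{2}/\cosh^{3}$, and since $r\leq t$ on $E_{t}$ forces $\int_{E_{t}}\cosh\sqrt{-b}r\,d\sigma\leq\cosh\sqrt{-b}t\cdot v(t)$, chaining these inequalities gives
$$\int_{E_{t}}\frac{\sinh^{2}\sqrt{-b}r}{\cosh^{4}\sqrt{-b}r}\,\|\nabla^{\bot}r\|^{2}\,d\sigma\leq\frac{v(t)}{\cosh\sqrt{-b}t}.$$
Taking the square root and combining with the first Cauchy--Schwarz factor yields (\ref{acot4}) with $C_{2}=\bigl(\int_{S}\|A^{S}\|^{2}d\sigma\bigr)^{1/2}$.

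The argument is essentially routine once the right splitting is chosen; the only mild subtlety is the passage $s\to 0^{+}$ in Proposition \ref{PropIneq}, which just requires a brief monotone/dominated convergence check. The genuine conceptual step, and what I expect to be the only real obstacle to notice, is that the weight $\sinh^{2}/\cosh^{4}$ produced by Cauchy--Schwarz is precisely dominated by the $\sinh^{2}/\cosh^{3}$ weight appearing in Proposition \ref{PropIneq}, so that the already-available cubic-denominator estimate hands the required bound essentially for free.
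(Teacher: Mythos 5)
Your proof is correct and follows essentially the same route as the paper: the pointwise bound by $\Vert A^S\Vert\,\Vert\nabla^{\bot}r\Vert$, Cauchy--Schwarz, the $s\to 0$ case of Proposition \ref{PropIneq}, and the monotonicity of $\cosh$ to reach $v(t)/\cosh(\sqrt{-b}\,t)$. The only (immaterial) difference is the weight distribution in Cauchy--Schwarz: the paper splits the integrand as $\Vert A^S\Vert\cosh^{-1/2}$ times $\sinh\,\Vert\nabla^{\bot}r\Vert\cosh^{-3/2}$ and bounds the first factor via $\int_S e^{-\sqrt{-b}\,r}\Vert A^S\Vert^2 d\sigma<\infty$, whereas you keep $\Vert A^S\Vert$ unweighted and absorb the discrepancy with $\cosh^{-4}\leq\cosh^{-3}$, yielding $C_2=\bigl(\int_S\Vert A^S\Vert^2 d\sigma\bigr)^{1/2}$ directly from hypothesis (\ref{Hyp3}).
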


\begin{proof}
Let us consider $\{e_1,e_2\}$ an orthonormal basis of $T_pS$, ($p \in S$), being $e_1=\frac{\nabla^S r}{\Vert \nabla^S r\Vert}$. Then
\begin{equation}
\Vert A^S(\frac{\nabla^S r}{\Vert \nabla^S r\Vert},\frac{\nabla^S r}{\Vert \nabla^S r\Vert})\Vert^2
\leq \Vert A^S\Vert^2 
\end{equation}
so 
\begin{equation}\label{ineqbot}
\langle A^{S}(\frac{\nabla^{S}r}{\Vert \nabla^{S}r\Vert
},\frac{\nabla^{S}r}{\Vert \nabla^{S}r\Vert }),\nabla^{\bot
}r\rangle \leq \Vert A^S\Vert ~\Vert\nabla^{\bot} r\Vert
\end{equation}

Applying Cauchy-Schwartz Inequality to the functions $$\frac{\Vert A^S\Vert}{(\cosh (\sqrt{-b} r))^{1/2}} \,\,\,\,\text{and}\,\, \,\,\,\,\frac{\sinh (\sqrt{-b} r) \Vert \nabla^{\bot} r\Vert}{(\cosh (\sqrt{-b} r))^{3/2}},$$ we obtain:
\begin{equation}
\begin{aligned}
&  \int_{E_{t}}\frac{\sinh(\sqrt{-b}r)}{\cosh^{2}(\sqrt{-b}
r)}\left\langle A^{S}(\frac{\nabla^{S}r}{\Vert \nabla^{S}r\Vert
},\frac{\nabla^{S}r}{\Vert \nabla^{S}r\Vert }),\nabla^{\bot
}r\right\rangle d\sigma\leq\\
&  \int_{E_{t}}\sinh(\sqrt{-b}r)\Vert A^{S}\Vert \frac{\Vert
\nabla^{\bot}r\Vert }{\cosh^{2}(\sqrt{-b}r)}d\sigma \leq\\
&  \sqrt{\int_{E_{t}}\frac{\Vert A^{S}\Vert ^{2}d\sigma}
{\cosh(\sqrt{-b}r)}}\sqrt{\int_{E_{t}}\frac{\sinh^{2}(\sqrt{-b}r)\Vert
\nabla^{\bot}r\Vert ^{2}d\sigma}{\cosh^{3}(\sqrt{-b}r)}}.\nonumber
\end{aligned}
\end{equation}

Taking $s=0$ in Proposition \ref{PropIneq} we obtain

\begin{equation*}\label{ineq_s=0}
\begin{aligned}
 \int_{E_{t}}\frac{1+\sinh^{2}(\sqrt{-b}r)\Vert \nabla^{\bot
}r\Vert ^{2}}{\cosh^{3}(\sqrt{-b}r)}d\sigma &\leq\,\frac{\int_{E_{t}}\cosh(\sqrt{-b}r)d\sigma}{\cosh^{2}(\sqrt{-b}t)}
\end{aligned}
\end{equation*}

As, on the other hand, $\cosh(\sqrt{-b}r)$ is non-decreasing, then

\begin{equation*}
\frac{\int_{E_{t}}\cosh(\sqrt{-b}r)d\sigma}{\cosh^{2}(\sqrt{-b}t)}\leq
\frac{\cosh(\sqrt{-b}t)v(t)}{\cosh^{2}(\sqrt{-b}t)}=\frac{v(t)}{\cosh
(\sqrt{-b}t)}
\end{equation*}

\noindent Hence
\begin{equation*}
\int_{E_{t}}\frac{\sinh^{2}(\sqrt{-b}r)\Vert \nabla^{\bot}r\Vert
^{2}}{\cosh^{3}(\sqrt{-b}r)}d\sigma\leq\frac{v(t)}{\cosh(\sqrt{-b}t)}
\end{equation*}
and therefore:
\begin{equation*}
\begin{aligned}
&  \int_{E_{t}^{{}}}\frac{\sinh(\sqrt{-b}r)}{\cosh^{2}(\sqrt{-b}r)}
\langle A^{S}(\frac{\nabla^{S}r}{\Vert \nabla^{S}r\Vert },\frac{\nabla
^{S}r}{\Vert \nabla^{S}r\Vert }),\nabla^{\bot}r\rangle\leq\\
&  \sqrt{\int_{E_{t}}\frac{\Vert A^{S}\Vert ^{2}}{\cosh(\sqrt
{-b}r)}}\sqrt{\frac{v(t)}{\cosh(\sqrt{-b}t)}}
\end{aligned}
\end{equation*}

As $\frac{1}{\cosh\sqrt{-b}t}\leq 2e^{-\sqrt{-b}t}\,\,\forall t>0$, we have

\begin{equation*}
0\leq \sqrt{\int_{E_{t}}\frac{\Vert A^{S}\Vert ^{2}d\sigma}{\cosh
(\sqrt{-b}r)}}\leq\sqrt{\int_{S}2e^{-\sqrt{-b}r}\Vert A^{S}\Vert
^{2}d\sigma}=C_2 <\infty
\end{equation*}
because $\int_{S}e^{-\sqrt{-b}r}\Vert A^{S}\Vert
^{2}d\sigma < \infty$ as we have seen before.
\end{proof}

Returning to (\ref{acot3}), and using Lemma \ref{PropIneqCoro}, we have 

\begin{equation*}
\frac{v(t)}{\cosh(\sqrt{-b}t)}\leq C_{1}(0)+C_{2}\sqrt{\frac{v(t)}{\cosh
(\sqrt{-b}t)}}.
\end{equation*}

By putting $h(t)=\sqrt{\frac{v(t)}{\cosh(\sqrt{-b}t)}\text{ }}$ the inequality above becomes:

\begin{equation*}
h^{2}(t)-C_{2}h(t)-C_{1}(0)\leq0
\end{equation*}
\noindent and hence the values of $h(t)$ lie between the zeroes of the function $
f(x)=x^{2}-C_{2}x-C_{1}(0)$, which are real and distinct numbers (because $C_{1}(0)>0$ and $C_2 \geq 0$ and it is not possible that $C_1(0)=C_2=0$). Hence, $h(t)$ (and also $h^2(t)$) are bounded.

We have proven that $\frac{v(t)}{\cosh(\sqrt{-b}t)}<\infty$ and therefore, 
$\frac{v(t)}{\cosh(\sqrt{-b}t)-1} <\infty$, so assertion (1) of the Theorem is proven.\\

To prove assertion (2), we remember equation (\ref{eqdef}) so
that inequality (\ref{des3}) becomes

\begin{equation}
-2\pi\chi(E_{t})\leq-\int_{E_{t}}(K_{N}-b)d\sigma+\frac{1}{2}R(t)+I(t)-\eta
_{\omega_{b}}(t)v^{\prime}(t)-b~v(t) \label{desi_inicio_th2}
\end{equation}

We now need the following

\begin{lemma}\label{lema acotacion integral v} 
$\int_{0}^{t}\cosh(\sqrt{-b}s)~v^{\prime
}(s)ds\geq\frac{\cosh(\sqrt{-b}t)+1}{2}v(t)$
\end{lemma}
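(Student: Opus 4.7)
The plan is to combine integration by parts with the Minimal Monotonicity property from Corollary \ref{minmon}. Writing $a=\sqrt{-b}$ for brevity, I would first apply integration by parts to obtain
\[
\int_{0}^{t}\cosh(as)\,v'(s)\,ds \;=\; \cosh(at)\,v(t) \;-\; a\int_{0}^{t}\sinh(as)\,v(s)\,ds,
\]
using the fact that $v(0)=0$. This reduces the problem to an upper bound on $\int_{0}^{t}\sinh(as)\,v(s)\,ds$.

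Next I would invoke Corollary \ref{minmon}, which says that $s\mapsto v(s)/(\cosh(as)-1)$ is non-decreasing. Consequently, for every $0<s\le t$,
\[
v(s) \;\le\; \frac{\cosh(as)-1}{\cosh(at)-1}\,v(t).
\]
Substituting this bound into the integral and pulling the constant $v(t)/(\cosh(at)-1)$ outside, the problem reduces to the elementary calculation
\[
a\int_{0}^{t}\sinh(as)\bigl(\cosh(as)-1\bigr)ds \;=\; \tfrac{1}{2}\bigl(\cosh(at)-1\bigr)^{2},
\]
which follows from the antiderivatives $\tfrac{1}{2}\cosh^{2}(as)$ and $\cosh(as)$. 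Hence
\[
a\int_{0}^{t}\sinh(as)\,v(s)\,ds \;\le\; \tfrac{1}{2}\bigl(\cosh(at)-1\bigr)\,v(t).
\]

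Plugging this back into the integration-by-parts identity yields
\[
\int_{0}^{t}\cosh(as)\,v'(s)\,ds \;\ge\; \cosh(at)\,v(t) - \tfrac{1}{2}\bigl(\cosh(at)-1\bigr)v(t) \;=\; \frac{\cosh(at)+1}{2}\,v(t),
\]
which is the desired inequality. The only non-routine ingredient is the monotonicity of $v(s)/(\cosh(\sqrt{-b}s)-1)$, which is already available as Corollary \ref{minmon}; the rest is integration by parts and a direct evaluation. No regularity obstacle arises because $v$ is smooth at regular values of the extrinsic distance (Theorem \ref{coarea}), and Sard's theorem permits working on a set of full measure where the integrand is well defined.
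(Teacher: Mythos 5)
Your proof is correct and follows essentially the same route as the paper: integration by parts (using $v(0)=0$) combined with the minimal monotonicity of $s\mapsto v(s)/(\cosh(\sqrt{-b}s)-1)$ from Corollary \ref{minmon}. The only, harmless, difference is that you exploit the monotonicity as a pointwise bound $v(s)\le\frac{\cosh(\sqrt{-b}s)-1}{\cosh(\sqrt{-b}t)-1}\,v(t)$ and evaluate $\sqrt{-b}\int_{0}^{t}\sinh(\sqrt{-b}s)\bigl(\cosh(\sqrt{-b}s)-1\bigr)ds=\tfrac{1}{2}\bigl(\cosh(\sqrt{-b}t)-1\bigr)^{2}$ explicitly, whereas the paper uses the differentiated form $(\cosh(\sqrt{-b}s)-1)v'(s)\ge\sqrt{-b}\sinh(\sqrt{-b}s)\,v(s)$ and then moves the resulting integral to the other side.
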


\begin{proof}
As $\frac{v(t)}{\cosh(\sqrt{-b}t)-1}$ is non-decreasing, we know that 
\begin{equation}\label{ineq1}
\left(  \cosh(\sqrt{-b}t)-1\right)  v^{\prime}(t)\geq v(t)\sqrt{-b}\sinh
(\sqrt{-b}t)
\end{equation}

Hence, integrating both sides of the inequality above:
\begin{equation*}
\begin{aligned}
&  \int_{0}^{t}\cosh(\sqrt{-b}s)~v^{\prime}(s)ds=\\
&  v(t)\cosh(\sqrt{-b}t)-\sqrt{-b}\int_{0}^{t}v(s)\sinh(\sqrt{-b}s)ds\geq\\
&  v(t)\cosh(\sqrt{-b}t)-\int_{0}^{t}(\cosh(\sqrt{-b}s)-1)v^{\prime}(s)ds=\\
&  v(t)(\cosh(\sqrt{-b}t)+1)-\int_{0}^{t}\cosh(\sqrt{-b}s)~v^{\prime}(s)ds.
\end{aligned}
\end{equation*}
\end{proof}

Again, using the definition of $I(t)$, inequality (\ref{ineqbot}), and the arithmetic-geometric mean inequality $xy\leq\frac{x^{2}+y^{2}}{2}$, we have 
\begin{equation}\label{ineqdeI}
\begin{aligned}
  I(t)&\leq\int_{\partial E_{t}}\Vert A^{S}\Vert \frac{\Vert
\nabla^{\perp}r\Vert }{\Vert\nabla^{S}r\Vert}d\sigma_t\\&=\int_{\partial E_{t}
}\frac{\Vert A^{S}\Vert }{\sqrt{\eta_{\omega_{b}}(t)}\sqrt
{\Vert\nabla^{S}r\Vert}}\frac{\sqrt{\eta_{\omega_{b}}(t)}\Vert \nabla^{\perp}r\Vert }
{\sqrt{\eta_{\omega_{b}}(t)}\sqrt{\Vert\nabla^{S}r\Vert}}d\sigma_t\leq\\
&  \frac{1}{2}\int_{\partial E_{t}}\left(  \frac{\Vert A^{S}\Vert
^{2}}{\eta_{\omega_{b}(t)}\Vert\nabla^{S}r\Vert}+\frac{\eta_{\omega_{b}}(t)\Vert
\nabla^{\perp}r\Vert ^{2}}{\Vert\nabla^{S}r\Vert}\right)  d\sigma_t\leq\\&
\frac{1}{\eta_{\omega_{b}}(t)}\int_{\partial E_{t}}\frac{\Vert
A^{S}\Vert ^{2}}{\Vert\nabla^{S}r\Vert}+\eta_{\omega_{b}}(t)\int
_{\partial E_{t}}\frac{\Vert \nabla^{\perp}r\Vert ^{2}}{\Vert
\nabla^{S}r\Vert}d\sigma_t. 
\end{aligned}
\end{equation}
But, by applying the co-area formula,
\[
\frac{1}{\eta_{\omega_{b}}(t)}R^{\prime}(t)=\frac{1}{\eta_{\omega_{b}}(t)}
\int_{\partial E_{t}}\frac{\Vert A^{S}\Vert ^{2}}{\Vert\nabla
^{S}r\Vert}d\sigma_t,
\]
so we have
\begin{equation}\label{ineqdeII}
I(t) \leq \frac{R'(t)}{\eta_{\omega_{b}}(t)}+ \eta_{\omega_{b}}(t)\int
_{\partial E_{t}}\frac{\Vert \nabla^{\perp}r\Vert ^{2}}{\Vert
\nabla^{S}r\Vert}d\sigma_t.
\end{equation}
 On the other hand, by using the co-area formula, inequality (\ref{desig2}), and Lemma \ref{lema acotacion integral v} we obtain:
\begin{equation}\label{integr_grad}
\begin{aligned}
&  \eta_{\omega_{b}}(t)\int_{\partial E_{t}}\frac{\Vert \nabla^{\perp
}r\Vert ^{2}}{\Vert\nabla^{S}r\Vert}d\mu=\eta_{\omega_{b}}
(t)\int_{\partial E_{t}}\frac{1-\Vert\nabla^{S}r\Vert^{2}}{\Vert\nabla
^{S}r\Vert}d\mu\\ \leq
&  \eta_{\omega_{b}}(t)v^{\prime}(t)-\eta_{\omega_{b}}(t)\int_{\partial E_{t}
}\Vert\nabla^{S}r\Vert d\sigma\\ \leq
&  \eta_{\omega_{b}}(t)v^{\prime}(t)-\frac{2\eta_{\omega_{b}}(t)\sqrt{-b}
}{\sinh(\sqrt{-b}t)}\int_{E_t}\cosh(\sqrt{-b}r)d\sigma\\=
&  \eta_{\omega_{b}}(t)v^{\prime}(t)-\frac{2\eta_{\omega_{b}}(t)\sqrt{-b}
}{\sinh(\sqrt{-b}t)}\int_{0}^{t}\cosh(\sqrt{-b}s)~v^{\prime}(s)ds\\ \leq
&  \eta_{\omega_{b}}(t)v^{\prime}(t)-\frac{v(t)\eta_{\omega_{b}}(t)\sqrt{-b}
}{\sinh\sqrt{-b}t}(\cosh\sqrt{-b}t~+1)\\
&= \eta_{\omega_{b}}(t)v^{\prime}(t)-\eta_{\omega_{b}}(t)^{2}v(t)-\frac
{\sqrt{-b}\eta_{\omega_{b}}(t)v(t)}{\sinh\sqrt{-b}t}
\end{aligned}
\end{equation}

Finally, from (\ref{ineqdeII}) y (\ref{integr_grad}) we obtain:
\begin{equation} \label{des_i}
I(t)\leq\frac{1}{\eta_{\omega_{b}}(t)}R^{\prime}(t)+\eta_{\omega_{b}
}(t)v^{\prime}(t)-\eta_{\omega_{b}}(t)^{2}v(t)-\frac{\sqrt{-b}\eta_{\omega
_{b}}(t)v(t)}{\sinh\sqrt{-b}t}. 
\end{equation}

Now considering (\ref{desi_inicio_th2}), and applying (\ref{des_i}):
\begin{equation} \label{char}
\begin{aligned}
&  -2\pi\chi(E_{t})\leq\int_{E_{t}}(b-K_{N})d\sigma+\frac{1}{2}R(t)+\frac
{1}{\eta_{\omega_{b}}(t)}R^{\prime}(t)\\ +
&  \eta_{\omega_{b}}(t)v^{\prime}(t)-\eta_{\omega_{b}}(t)^{2}v(t)-\left(
\eta_{\omega_{b}}(t)v^{\prime}(t)+b~v(t)\right)  -\frac{\sqrt{-b}\eta
_{\omega_{b}}(t)v(t)}{\sinh\sqrt{-b}t}\\ \leq
&  \int_{E_{t}}(b-K_{N})d\sigma+\frac{1}{2}R(t)+\frac{1}{\eta_{\omega_{b}}
(t)}R^{\prime}(t)\\&+v(t)(-b-\eta_{\omega_{b}}(t)^{2})-\frac{\sqrt{-b}
\eta_{\omega_{b}}(t)v(t)}{\sinh\sqrt{-b}t}
\end{aligned}
\end{equation}

It is straightforward to see, taking into account that $\operatorname{Vol}(B_{t}^{b,2})=\frac{-2\pi}{b}(\cosh
\sqrt{-b}t-1)$,
\begin{equation}
v(t)(-b-\eta_{\omega_{b}}(t)^{2})-\frac{\sqrt{-b}
\eta_{\omega_{b}}(t)v(t)}{\sinh\sqrt{-b}t}=\frac{b v(t)}{\cosh\sqrt{-b}t-1}=\frac{-2\pi v(t)}{\operatorname{Vol}(B_{t}^{b,2})}
\end{equation}

and hence

\begin{equation}\label{ineq5}
\begin{aligned}
-2\pi\chi(E_{t})&\leq\int_{E_{t}}(b-K_{N})d\sigma+\frac{1}{2}R(t)+\frac
{1}{\eta_{\omega_{b}}(t)}R^{\prime}(t)-\frac{2\pi v(t)}{\operatorname{Vol}(B_{t}^{b,2})}
\end{aligned}
\end{equation}

As we define $R(t)=\int_{E_t}\Vert A^{S}\Vert ^{2}d\sigma$, then $\int_{S}\Vert A^{S}\Vert ^{2}d\sigma=\lim_{t \to \infty} R(t)=\int_{0}^{+\infty
}R^{\prime}(t)dt<+\infty$. Therefore, there is a monotone increasing (sub)sequence $\{t_i\}_{i=1}^\infty$ tending to infinity (namely, $t_i \to \infty$ when $i \to \infty$), such that $R'(t_i)\rightarrow 0$ when $i\to \infty$, and hence 
\[
\underset{i\rightarrow+\infty}{\lim}\frac{1}{\eta_{\omega_{b}}(t_{i}
)}R^{\prime}(t_{i})=\frac{0}{\sqrt{-b}}=0.
\]

Let us consider the exhaustion of $S$ by these extrinsic balls, namely, $\{E_{t_i}\}_{i=1}^\infty$. Since $\{E_{t_i}\}_{i=1}^\infty$ is a family of  precompact open sets exhausting $S$, then the sequence $$\{\inf(\{-\chi(E_{r_k})\}_{k=i}^\infty\}_{i=1}^\infty$$ is monotone non-decreasing. Then
we have, by replacing $t$ for $t_i$ and taking limits when $i \to \infty$ in inequality (\ref{ineq5}), that
\begin{equation*}
\begin{aligned}
&\lim_{i\rightarrow \infty}\inf(\{-\chi(E_{r_k})\}_{k=i}^\infty)\\ &\leq \int_{S}(b-K_{N})d\sigma+\frac{1}{2}\int_{S}\Vert
A^{S}\Vert ^{2}d\sigma-2\pi\Sup_{t>0}\frac{v(t)}{\operatorname{Vol}(B_{t}^{b,2})}<\infty
\end{aligned}
\end{equation*}
and hence, by applying Theorem \ref{CorDifeo}, $S^2$ has finite topology and
\begin{equation}
-2\pi\chi(S)\leq\int_{S}(b-K_{N})d\sigma+\frac{1}{2}\int_{S}\Vert
A^{S}\Vert ^{2}d\sigma-2\pi\Sup_{t>0}\frac{ v(t)}{\operatorname{Vol}(B_{t}^{b,2})}
\end{equation}

\section{Proof of Theorem \ref{thmMain2}}
We are going to apply Theorem \ref{thmMain1}, and to do so it is enough to check that hypothesis (\ref{Hyp2}) in Theorem \ref{thmMain1}, i.e.,  inequality 
$$\int_S (b - K_N) d\sigma < \infty$$
\noindent is satisfied in our setting.
By Definition \ref{asymptLoc}, we have that $\vert K_N\vert_S -b\vert \leq K e^{-2\sqrt{-b} r(x)}$, for all $x \in S -E_M(o)$, $E_M(o)$ being an extrinsic ball centered at one pole $o \in N$. Hence, if we consider $\{E_{t}\}_{t>0}$ an exhaustion of $S$ by extrinsic
balls centered at the pole $o \in N$, we have,

\begin{equation}\label{eq1}
\begin{aligned}
\int_S (b - K_N\vert_S) d\sigma &\leq \int_S \vert b - K_N\vert_S \vert d\sigma= \int_{E_M(o)} \vert b - K_N\vert_S \vert d\sigma\\&+ \int_{S-E_{M}(o)} \vert b - K_N\vert_S \vert d\sigma\\ &\leq C_1+K\int_{S-E_{M}(o)} e^{-2\sqrt{-b} r} d\sigma\\&  \leq C_1+K\int_{S} e^{-2\sqrt{-b} r} d\sigma
\end{aligned}
\end{equation}

and, applying the co-area formula as in (\ref{integrals1}) and (\ref{integrals}), we obtain 


\begin{equation}\label{eq3}
\begin{aligned}
\int_S (b - K_N\vert_S) d\sigma &\leq \int_S \vert b - K_N\vert_S \vert d\sigma\\& \leq  C_1+K\int_{S} e^{-2\sqrt{-b} r} d\sigma = C_1 + K \lim_{t \to \infty} v(t) e^{-2\sqrt{-b} t}\\&+ 2K \sqrt{-b}\lim_{t \to \infty}\int_0^t v(s) e^{-2\sqrt{-b} s} ds
\end{aligned}
\end{equation}

To prove the theorem, we must check that $\lim_{t \to \infty} v(t) e^{-2\sqrt{-b} t} < \infty$ and that \newline $\int_0^\infty v(s) e^{-2\sqrt{-b} s} ds <\infty$. To do so, let us consider the non-decreasing function $f(t)=\frac{v(t)}{e^{\sqrt{-b} t}}$ (see Corollary \ref{minmon}). We shall see that $f(t)$ is bounded, that is, that $\lim_{t \to \infty} f(t) <\infty$.

Taking into account the fact that $\eta_{w_b}(t)=\sqrt{-b}\coth(\sqrt{-b} t) \geq \sqrt{-b}\quad \forall t >0$, we obtain
\begin{equation}\label{eq4}
\sqrt{-b} v'(t) + b v(t) \leq \eta_{w_b}(t) v'(t) + b v(t)\quad \forall t >0
\end{equation}

On the other hand,
\begin{equation}\label{eq5}
\sqrt{-b} v'(t) + b v(t)= \sqrt{-b}e^{\sqrt{-b} t} f'(t)
\end{equation}

so, using inequality (\ref{des3}) in the proof of Theorem \ref{thmMain1},

\begin{equation}\label{eq7}
\begin{aligned}
 f'(t) &\leq \frac{1}{\sqrt{-b}}e^{-\sqrt{-b} t} \{ \int_{E_{t}}(b-K_{N})d\sigma+\frac{1}{2}R(t)+I(t)+2\pi\chi(E_{t})\}\\& \leq \frac{1}{\sqrt{-b}}e^{-\sqrt{-b} t} \{ \int_{E_{t}}\vert b-K_{N}\vert d\sigma+\frac{1}{2}R(t)+I(t)+2\pi\chi(E_{t})\} 
 \end{aligned}
 \end{equation}
 
 Now, we integrate both sides of inequality (\ref{eq7}) between $0$ and $t > R_0$ as in the proof of Theorem \ref{thmMain1}. Then:
\begin{equation}\label{eq8}
\begin{aligned}
  f(t)&\leq\frac{1}{\sqrt{-b}}\{  \int_{0}
^{t}e^{-\sqrt{-b}s}\int_{E_{s}}\vert b-K_{N}\vert d\sigma ds\\&+
  \int_{0}^{t}e^{-\sqrt{-b}s}R(s)ds+\int_{0}^t e^{-\sqrt{-b}s} I(s)ds \\&+
  C_2(0) \} 
\end{aligned}
\end{equation}
where, as in the proof of Theorem \ref{thmMain1},
\begin{equation*}
\begin{aligned}
0 &< C_2(0)= 4\pi\int_{0}^{R_0}\chi(E_s)e^{-\sqrt{-b}
s} ds+4\pi \vert \chi(S)\vert \int_{R_0}^{\infty}e^{-\sqrt{-b}
s} ds\\&= 4\pi\int_{0}^{R_0}\chi(E_s)e^{-\sqrt{-b}
s} ds+\frac{4 \pi \vert \chi(S)\vert }{\sqrt{-b}}e^{-\sqrt{-b}R_0}<\infty
\end{aligned}
\end{equation*}

With the same arguments as in the proof of Theorem \ref{thmMain1} and using hypothesis (\ref{Hypo1}), we have 

\begin{equation}\label{eq9}
\begin{aligned}
f(t)&\leq\frac{1}{\sqrt{-b}}\{  \int_{0}
^{t}e^{-\sqrt{-b}s}\int_{E_{s}}\vert b-K_{N}\vert d\sigma ds\\&+\int_{0}^t e^{-\sqrt{-b}s} I(s)ds +
  C_3 \} 
 \end{aligned}
 \end{equation}
 
where $0< C_3 =C_2(0) + \int_S e^{-\sqrt{-b}r}\Vert A^S \Vert ^2d\sigma < \infty$
\medskip
 
Now, we are going to prove the following Lemma:
\medskip

\begin{lemma}
\label{Lemma2}

There is a constant $C_{4} \, >\, 0$ satisfying
\begin{equation}\label{eq10}
\begin{aligned}
\int_{0}^t e^{-\sqrt{-b}s} I(s)ds \leq C_4 \sqrt{f(t)} \quad \forall t >0
\end{aligned}
\end{equation}
\end{lemma}

\begin{proof}

We argue as in Lemma \ref{PropIneqCoro}: by applying Cauchy-Schwartz inequality and the co-area formula, and using inequality (\ref{ineqbot}), we obtain
\begin{equation}\label{eq11}
\begin{aligned}
&  \int_0^t e^{-\sqrt{-b} s} I(s) ds =\int_0^t e^{-\sqrt{-b} s}\int_{\partial D_s}\left\langle A^{S}(\frac{\nabla^{S}r}{\Vert \nabla^{S}r\Vert
},\frac{\nabla^{S}r}{\Vert \nabla^{S}r\Vert }),\nabla^{\bot
}r\right\rangle d\sigma_s ds \\ & \leq
  \int_0^t e^{-\sqrt{-b} s}\int_{\partial D_s}\Vert A^{S}\Vert \frac{\Vert
\nabla^{\bot}r\Vert }{\Vert \nabla^S r\Vert}d\sigma_s ds \leq
  \int_{E_{t}} \frac{\Vert A^{S}\Vert
\Vert \nabla^{\bot}r\Vert d\sigma}
{\sqrt{e^{\sqrt{-b}r}}  \sqrt{e^{\sqrt{-b}r}} } \\& \leq \sqrt{\int_{E_t}\frac{\Vert A^{S}\Vert^2
 d\sigma} {e^{\sqrt{-b}r} }} \sqrt{\int_{E_t}\frac{\Vert
\nabla^{\bot}r\Vert^2
 d\sigma}
{e^{\sqrt{-b}r} }} \leq C_4 \sqrt{\int_{E_t}\frac{\Vert
\nabla^{\bot}r\Vert^2
 d\sigma}
{e^{\sqrt{-b}r} }} 
\end{aligned}
\end{equation}
because $0<\int_{E_t}\frac{\Vert A^{S}\Vert^2
 d\sigma} {e^{\sqrt{-b}r} }= C_4 < \infty$

To conclude the proof of the Lemma, we are going to see that, for all $t >0$,
\begin{equation}\label{eq12}
\int_{E_t}\frac{\Vert
\nabla^{\bot}r\Vert^2
 d\sigma}
{e^{\sqrt{-b}r} } \leq \frac{v(t)}{e^{\sqrt{-b}t}}
\end{equation}

By inequality (\ref{compa_cosh}), we have, for all $r>0$

\begin{equation}\label{equ13}
\Delta^{S}\cosh\sqrt{-b}r\geq-2b\cosh\sqrt{-b}r\geq -be^{\sqrt{-b}r} 
\end{equation}

Integrating two sides of (\ref{equ13}) and applying Divergence theorem, we have
\begin{equation}\label{equ14}
\frac{\sinh{\sqrt{-b}t}}{\sqrt{-b}}\int_{\partial E_t} \Vert \nabla^S r\Vert d\sigma_t \geq \int_{E_t} e^{\sqrt{-b}r}d\sigma
\end{equation}

Deriving the function $\frac{\int_{E_u} e^{\sqrt{-b}r}d\sigma}{e^{2\sqrt{-b}u}}$ and using inequality (\ref{equ14}):

\begin{equation}\label{equ15}
\frac{d}{du}\frac{\int_{E_u} e^{\sqrt{-b}r}d\sigma}{e^{2\sqrt{-b}u}} \geq \int_{\partial E_u} e^{-\sqrt{-b}r}\frac{\Vert\nabla^{\bot}r\Vert^2}{\Vert \nabla^S r\Vert} d\sigma_u
\end{equation}

So, by integrating both sides of (\ref{equ15}) between $0$ and $t$ and using the co-area formula, and the fact that $e^{\sqrt{-b}r}$ is non-decreasing:

\begin{equation}
\begin{aligned}
\int_{ E_t} e^{-\sqrt{-b}r}\Vert \nabla^{\bot} r \Vert^2 d\sigma 
&\leq \frac{\int_{E_t} e^{\sqrt{-b}r}d\sigma}{e^{2\sqrt{-b}t}}
\leq \frac{v(t)}{e^{\sqrt{-b}t}}
\end{aligned}
\end{equation}

Then, there exists $C_4 \geq 0$ such that

\begin{equation}
\int_0^t e^{-\sqrt{-b}s} I(s)ds \leq C_4\sqrt{\int_{ E_t} e^{-\sqrt{-b}r}\Vert\nabla^{\bot}r\Vert^2 d\sigma} \leq C_4 \sqrt{\frac{v(t)}{e^{\sqrt{-b}t}}}
\end{equation}
\end{proof}

Now, using inequality (\ref{eq9}) and Lemma \ref{Lemma2} we have
\begin{equation}\label{eq10}
f(t)\leq\frac{1}{\sqrt{-b}}\{  \int_{0}
^{t}e^{-\sqrt{-b}s}\int_{E_{s}}\vert b-K_{N}\vert d\sigma ds+C_4 \sqrt{f(t)} +
  C_3 \} 
 \end{equation}\\

 We are now going to see that
 \begin{equation}
  \int_{0}
^{t}e^{-\sqrt{-b}s}\int_{E_{s}}\vert b-K_{N}\vert d\sigma ds \leq C_5+K \int_{0}^{t}e^{-\sqrt{-b}s}\int_{E_{s}}e^{-\sqrt{-b} r} d\sigma ds
\end{equation}

As $\vert b-K_{N}(x)\vert=O(e^{-2\sqrt{-b} r(x)})$, namely, there exists $M >0$ and $K >0$ such that $\vert b-K_{N}(x)\vert \leq Ke^{-2\sqrt{-b} r(x)}\leq Ke^{-\sqrt{-b} r(x)}$ for all $x\in S-E_M(o)$, then
\begin{equation}
\begin{aligned} 
\int_{0}^{t}&e^{-\sqrt{-b}s}\int_{E_{s}}\vert b-K_{N}\vert d\sigma ds  \leq \int_{0}^{M}e^{-\sqrt{-b}s}\int_{E_{s}}\vert b-K_{N}\vert d\sigma ds\\ & + \int_{M}^{t}e^{-\sqrt{-b}s}\int_{E_{s}}\vert b-K_{N}\vert d\sigma ds \\ & \leq C_5+ \int_{M}^{t}e^{-\sqrt{-b}s}\{\int_{E_{s}-E_{M}}\vert b-K_{N}\vert d\sigma +\int_{E_{M}}\vert b-K_{N}\vert d\sigma \}ds\\  & \leq C_5+K\int_{0}^{t}e^{-\sqrt{-b}s}\int_{E_{s}}e^{-\sqrt{-b} r} d\sigma ds
\end{aligned}
\end{equation}
 
 Now, using equality (\ref{integrals1}) in Proposition \ref{lema 3.1Chen}, and from the fact that given a fixed $t>0$, $e^{-\sqrt{-b}t} \leq e^{-\sqrt{-b}r}$ for all $r \leq t$, we have
 
 \begin{equation}
 \begin{aligned}
 \sqrt{-b} &\int_{0}^{t}e^{-\sqrt{-b}s}\int_{E_{s}}e^{-\sqrt{-b} r} d\sigma ds= \int_{E_{t}}e^{-2\sqrt{-b}r}d\sigma\\&-e^{-\sqrt{-b} t}\int_{E_{t}}e^{-\sqrt{-b} r} d\sigma  \leq \int_{E_{t}}e^{-2\sqrt{-b}r}\\&-e^{-\sqrt{-b} t}\frac{v(t)}{e^{\sqrt{-b} t}}=2\sqrt{-b}\int_0^t v(s) e^{-2 \sqrt{-b} s} ds\\& =2\sqrt{-b}\int_0^t f(s) e^{- \sqrt{-b} s} ds
 \end{aligned}
 \end{equation}
  and hence, from inequality (\ref{eq10}) and with $\bar C_1:=K > 0$, $\bar C_2:= C_5+C_3 > 0$ and $\bar C_3:=C_4 > 0$
  
  \begin{equation}\label{eq11}
f(t)\leq\frac{1}{\sqrt{-b}}\{ 2\bar C_1\int_0^t f(s) e^{- \sqrt{-b} s} ds+\bar C_2+\bar C_3 \sqrt{f(t)} \} 
 \end{equation} 
 
 On the other hand, $f(t)=\frac{v(t)}{e^{\sqrt{-b} t}} \geq 0$ for all $t >0$ and, as $S$ is minimal, using inequality (\ref{isopCompdos}) in Theorem \ref{isopT}, $f'(t) \geq 0$ for all $t >0$. Moreover, we can assume that there exists $t_0 >0$ such that $f(t) \geq 1$ for all $t \geq t_0$ (in contrast, $f(t) \leq 1 \,\,\forall t >0$ and the theorem is proven using inequality (\ref{eq3})). Hence, $f(t) \geq \sqrt{f(t)}$ for all $t \geq t_0$ and inequality (\ref{eq11}) becomes (for all $t>0$ because $f(t)$ is bounded in $[0, t_0]$):
 
 \begin{equation}\label{eq12}
f(t)\leq\frac{1}{\sqrt{-b}}\{ 2 \bar C_1\int_0^t f(s) e^{- \sqrt{-b} s} ds+\bar C_2+ \bar C_3 f(t)  \} 
 \end{equation} 
 
 Now, let us denote $y(t)= \int_0^t f(s) e^{- \sqrt{-b} s} ds$. Then, $y'(t) = f(t) e^{- \sqrt{-b} t}$ and $y(0)=0$. Therefore (\ref{eq12}) becomes the differential inequality:
 
  \begin{equation}\label{eq13}
A e^{ \sqrt{-b} t} y'(t)-By(t) \leq C
 \end{equation} 

\noindent with $A=1-\bar C_3$, $B=\frac{2 \bar C_1}{\sqrt{-b}} > 0$ and $C=\bar C_2 > 0$.\\
 Let us suppose that $A \neq 0$ (if $A=0$, then we have the result using (\ref{eq13})).\\ Then we now have the differential inequality
 \begin{equation}\label{eq13B}
 y'(t) \leq \frac{C}{A} e^{-\sqrt{-b}t} + \frac{B}{A} e^{-\sqrt{-b}t} y(t)=F(t,y(t))
 \end{equation}
 
 As $F(t)$ is continuous and locally Lipschitz, if we consider 
 $$u_0(t)= \frac{C}{B}(e^{\frac{B}{A}(1-e^{ -\sqrt{-b} t})}-1)$$
 \noindent  the solution of $y'(t)=F(t,y(t))$ with $y(0)=0$, by applying Theorem 1.4 in \cite{Hartman}, we have that for all $t >0$, 
 \begin{equation}
 y(t)=\int_0^t f(s) e^{- \sqrt{-b} s} ds \leq u_0(t)= \frac{C}{B}(e^{\frac{B}{A}(1-e^{ -\sqrt{-b} t})}-1) \leq C <\infty
 \end{equation}
 so now inequality (\ref{eq11}) becomes, 
 
 \begin{equation}\label{eq14}
f(t)\leq\frac{1}{\sqrt{-b}}\{ A_1 + A_2 \sqrt{f(t)} \} 
 \end{equation} 
 
 \noindent with $A_1=2\bar C_1 C+\bar C_2 > 0$ and $A_2 = \bar C_3> 0$
 
 Let us denote $g(t)= \sqrt{f(t)}$ and inequality (\ref{eq14}) becomes
 
  \begin{equation}\label{eq15}
g^2(t)-A_2 g(t)-A_1 \leq 0\qquad \forall t > 0 
 \end{equation} 
 
 Therefore, $g(t)$ lies between the zeroes of the function $x^2-A_2x -A_1$, which are real and distinct numbers, because $A_1 \geq 0$ and $A_2 \geq 0$, and it is not possible that $A_1= A_2=0$. Hence, $g(t)$ (and also $g^2(t)=f(t)=\frac{v(t)}{e^{ \sqrt{-b} t}}$) is bounded, so the Theorem is proven by using inequality (\ref{eq3}). 
 
\section{Proof of Theorem \ref{thmMain3}}

This proof is modeled on the proof of Theorem 3 in \cite{Ch2}. As $S$ is  minimal, we apply Theorem \ref{isopT}, the fact that the center of the extrinsic balls $o \in S$, and the co-area formula to obtain (see \cite{Pa} for detailed proof), that the function $v(t)= \Vol(E_t)$ satisfies
\begin{equation}\label{stp1}
v(t) \geq \Vol(B^{b,2}_t)\,\,\,\forall t >0
\end{equation}
Now, using the co-area formula again and the fact that the function $f(t)= 
\frac{\Vol(E_t)}{\Vol(B^{b,m}_t)}$ is monotone
non-decreasing in $t$ (and hence $v'(t) \geq \frac{2 \pi}{\sqrt{-b}}\sinh\sqrt{-b}t\,\,\,\forall t>0$), we have
\begin{equation}\label{stp2}
\int_{S} \frac{1}{\cosh^3\sqrt{-b} r} d\sigma \geq \frac{2\pi}{\sqrt{-b}}\int_0^\infty \frac{\sinh\sqrt{-b} t}{\cosh^3\sqrt{-b}} dt=\frac{\pi}{-b}
\end{equation}
As, on the other hand, 
\begin{equation}\label{stp3}
\lim_{t \to 0} \frac{\int_{E_t} \cosh r d\sigma}{\cosh^2 t} \leq \lim_{t \to 0} \frac{ v(t)}{\cosh t} =0
\end{equation}
by applying Proposition \ref{PropIneq}, we have:
\begin{equation}
\begin{aligned}
 \frac{\pi}{-b}=\lim_{t \to \infty} \frac{\int_{E_t} \cosh r d\sigma}{\cosh^2 t} &\geq \lim_{t \to \infty}\int_{E_{t}}\frac{1+\sinh^{2}\sqrt{-b}r\Vert\nabla^{\bot}r\Vert^{2}}{\cosh^{3}
\sqrt{-b}r}d\sigma\\ =\int_{S} \frac{1}{\cosh^3\sqrt{-b} r} d\sigma&+\int_S \frac{1+\sinh^{2}\sqrt{-b}r\Vert\nabla^{\bot}r\Vert^{2}}{\cosh^{3}
\sqrt{-b}r}d\sigma \\ \geq \frac{\pi}{-b}&+\int_S \frac{1+\sinh^{2}\sqrt{-b}r\Vert\nabla^{\bot}r\Vert^{2}}{\cosh^{3}
\sqrt{-b}r}d\sigma
\end{aligned}
\end{equation}
so $\int_S \frac{1+\sinh^{2}\sqrt{-b}r\Vert\nabla^{\bot}r\Vert^{2}}{\cosh^{3}
\sqrt{-b}r}d\sigma=0$ and hence $\Vert\nabla^{\bot}r\Vert=0$ on $S$. Therefore $\Vert\nabla r\Vert=1$ on $S$ and $S$ is a minimal cone in $N$. Moreover, by applying Theorem 3.1 in \cite{Mi}, $\chi(E_t)=\chi(S)$ for all $t >0$. As, for sufficiently small $t$, the extrinsic and the geodesic balls are diffeomorphic, $E_t \equiv B^{b,2}_t$, then $\chi(S)=1$.


\end{document}